\documentclass[11.23pt, reqno]{amsart}
\usepackage{amsmath,amssymb}
\usepackage{amsthm,amstext}
\usepackage{amsmath}
\usepackage{graphicx}
\usepackage{amsfonts}
\usepackage{amssymb}
\usepackage[colorlinks,linkcolor=blue,citecolor=blue]{hyperref}

\textwidth  =14cm \textheight       =20cm
\newtheorem{thm}{Theorem}[section]

\newtheorem{corollary}[thm]{Corollary}
\newtheorem{lemma}[thm]{Lemma}
\newtheorem{prop}[thm]{Proposition}

\theoremstyle{definition}
\newtheorem{defin}[thm]{Definition}

\newtheorem{remark}{ Remark}

\newtheorem{example}{ Example}

\numberwithin{equation}{section}

\date{}

\begin{document}
 \baselineskip=17pt
\title{Remarks on  GP-metric and  partial metric spaces  \\and fixed points results}
\author{  M. R. Ahmadi Zand }
\address{  Mathematics Department, Yazd university}
\email{mahmadi@yazd.ac.ir}

\author{ H. Golvardi Yazdy}
\address{  Mathematics Department, Yazd university}
\email{ h-golvardy@yahoo.com}
\date{}
\subjclass[2010]{Primary 54E35, 54E99. Secondary
54H25, 47H10.}
\keywords{G-metric, GP-metric, partial metric, T-lower semi-continuous,
  fixed point.}
 \begin{abstract}
\baselineskip=10pt
In the present paper, we show that every fixed point theorem  on G-metric (respectively partial metric) spaces implies a fixed point result  on partial metric (respectively GP-metric)   spaces. 
 Our results generalized and unify several fixed point theorems in the context of partial metric spaces and GP-metric spaces in the literature.
   \end{abstract}
\maketitle
\section{ Introduction and basic observations}\label{secintro}

Matthews, a computer scientists who is an expert on semantics, introduced the notion of partial metric spaces   in 1992 \cite {st}. Then, this concept was studied by many authors, for example see \cite{M.,IA1,IAFS,ps,bkmp,Lg,kar,5, IO}. \par
Let $X$ be a non-empty set. A {\it partial metric}  is a function $p:X\times X \rightarrow [0, \infty )$ such that,
\begin{enumerate}
\item[(P1)]
$x=y$ if and only if $p(x, x) = p(x, y) = p(y, y)$, for all $x, y\in X$,
\item[(P2)]
$p(x, x)\leq p(x, y)$, for all $x, y\in X$ (small self-distances),
\item[(P3)]
$p(x, y)= p(y, x)$, for all $x, y\in X$ (symmetry),
\item[(P4)]
$p(x, z)\leq p(x, y)+ p(y, z) -p(y, y)$, for all $x, y, z\in X$ (triangularity).
\end{enumerate}
 The pair $(X, p)$ is called a {\it partial metric } space.
 Also, each partial metric $p$ on X generates a $T_0$ topology $\tau_p$
on X with a base of the family of open $p$-balls $\{ Bp(x, \epsilon): x \in  X, \epsilon > 0 \}$, where $Bp(x, \epsilon) = \{y \in  X: p(x, y) < p(x, x) + \epsilon \}$
\cite{bkmp,st,9}. If $p$ is a partial metric on X, then $p^{s} : X \times X \to [0,\infty)$ defined by $p^{s}(x, y) = 2p(x, y) - p(x, x) - p(y, y) $ is a
metric on X. 
 \par Let $(X, p)$ be
a partial metric space. A sequence $ \{x_{n} \}_{n\geq 1}$ in X converges to $x \in  X$  if  $\lim_{n\to \infty} p(x_{n}, x) = p(x, x)$ \cite{st}. Also, $ \{x_{n} \}_{n\geq 1}$ is called a Cauchy sequence whenever $\lim_{n,m \to \infty} p(x_{n}, x_{m})$ exists and is finite.
 A partial metric space $(X, p)$ is
said to be complete whenever every Cauchy sequence $ \{x_{n} \}_{n\geq 1}$ in X converges to a point $x \in  X$ with respect to $\tau_p$ , that is,
$\lim_{n,m \to \infty} p(x_{n}, x_{m})=p(x,x)$.
\begin{lemma} \text{(see \cite{9})}\label{1} Let $(X, p)$ be a partial metric space. Then,
\begin{itemize}
\item[(1)] $ \{x_{n} \}_{n\geq 1}$ is a Cauchy sequence in $(X, p)$ if and only if $ \{x_{n} \}_{n\geq 1}$ is a Cauchy sequence in the metric space $(X, p^{s})$,
\item[(2)] $(X, p)$ is complete if and only if the metric space $(X, p^{s})$ is complete.
\end{itemize}
\end{lemma}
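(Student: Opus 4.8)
The plan is to reduce everything to the defining identity $p^s(x,y) = 2p(x,y) - p(x,x) - p(y,y)$ together with the ``small self-distance'' axiom (P2), which after using symmetry (P3) gives $p(x,y) \ge \max\{p(x,x), p(y,y)\}$. Writing $a_n := p(x_n, x_n)$, the decisive elementary inequality I would establish first is
\[
|a_n - a_m| \le p^s(x_n, x_m),
\]
valid for all $n,m$; it follows at once from $p^s(x_n, x_m) = 2p(x_n, x_m) - a_n - a_m \ge 2\max\{a_n, a_m\} - a_n - a_m = |a_n - a_m|$. This inequality is the engine of the whole argument.

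For part (1), in the forward direction, if $\{x_n\}$ is $p$-Cauchy then $\lim_{n,m} p(x_n, x_m) = L$ is finite; specializing $m=n$ in the double limit gives $a_n \to L$, whence $p^s(x_n, x_m) = 2p(x_n, x_m) - a_n - a_m \to 2L - L - L = 0$, so $\{x_n\}$ is $p^s$-Cauchy. Conversely, if $\{x_n\}$ is $p^s$-Cauchy, the displayed inequality shows $\{a_n\}$ is a Cauchy sequence of reals, hence $a_n \to a$ for some finite $a$; then $p(x_n, x_m) = \frac{1}{2}\bigl(p^s(x_n, x_m) + a_n + a_m\bigr) \to a$, so $\{x_n\}$ is $p$-Cauchy. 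This converse is the step requiring the most care, since the finite limit $L$ of $p(x_n,x_m)$ must be produced from scratch, and the device for doing so is precisely the convergence of $\{a_n\}$ extracted from the key inequality.

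For part (2), I would combine part (1) with a comparison of the two notions of convergence. For the forward implication, given $(X,p)$ complete and a $p^s$-Cauchy sequence, part (1) makes it $p$-Cauchy, so it $\tau_p$-converges to some $x$ with $\lim_{n,m} p(x_n, x_m) = p(x,x)$; then $a_n \to p(x,x)$ and $p(x_n, x) \to p(x,x)$ give $p^s(x_n, x) = 2p(x_n, x) - a_n - p(x,x) \to 0$, so $(X,p^s)$ is complete. For the backward implication, given $(X,p^s)$ complete and a $p$-Cauchy sequence with $\lim_{n,m} p(x_n,x_m)=L$ and $a_n\to L$, part (1) yields a $p^s$-limit $x$, i.e. $p^s(x_n,x)\to 0$. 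It then remains to identify $L$ with $p(x,x)$: from $p^s(x_n,x)\to 0$ one gets $p(x_n,x)\to \tfrac{1}{2}(L+p(x,x))$, and squeezing this between the two consequences of (P2), namely $p(x_n,x)\ge p(x,x)$ and $p(x_n,x)\ge a_n\to L$, forces $L=p(x,x)$ and hence $p(x_n,x)\to p(x,x)$, which is exactly $\tau_p$-convergence to $x$ with the required matching of self-distances. This identification of $L$ with $p(x,x)$ is the one genuinely nonformal point of the proof, and the two inequalities coming from (P2) are what make it go through.
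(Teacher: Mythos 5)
The paper offers no proof of this lemma at all: it is quoted from Matthews \cite{9}, so there is no in-paper argument to compare against, and your proof must stand on its own. It does. The engine inequality $|p(x_n,x_n)-p(x_m,x_m)|\le p^s(x_n,x_m)$ follows from (P2)--(P3) exactly as you say; the diagonal specialization $m=n$ inside the double limit legitimately gives $p(x_n,x_n)\to L$ in the forward half of (1); and in the converse half the completeness of $\mathbb{R}$ turns that inequality into convergence of the self-distances, from which $p(x_n,x_m)=\tfrac12\bigl(p^s(x_n,x_m)+p(x_n,x_n)+p(x_m,x_m)\bigr)\to a$ finishes the argument. The genuinely delicate point, identifying $L=\lim_{n,m}p(x_n,x_m)$ with $p(x,x)$ in the backward half of (2), is handled correctly: squeezing the limit $\tfrac12\bigl(L+p(x,x)\bigr)$ of $p(x_n,x)$ between the two (P2) lower bounds $p(x,x)$ and $p(x_n,x_n)\to L$ forces $L=p(x,x)$, which is precisely what $\tau_p$-convergence with matching self-distance requires. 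One remark connecting your argument to the paper: your inequality is a sharpened form of the one the authors use later in proving Lemma \ref{1mon}(A), where they bound $|p(x_n,x_n)-p(x_m,x_m)|$ by $G_p(x_m,x_m,x_n)+G_p(x_n,x_n,x_m)$; since $G_p(x,y,y)=p^s(x,y)$, that bound is just $2p^s(x_n,x_m)$, and their detour through boundedness and extraction of a convergent subsequence of $\{p(x_n,x_n)\}$ is unnecessary once one observes, as you do, that the inequality directly makes $\{p(x_n,x_n)\}$ Cauchy in $\mathbb{R}$.
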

\section{Remarks on  G-metric spaces and fixed points theorems}

\par Mustafa and Sims \cite {Zead} introduced a new structure of generalized metric spaces which are called G-metric spaces to develop and introduce a new fixed point theory for various mappings in this new structure. In this section, a new  connection between partial metric and G-metric spaces are established   and then  we show that any  fixed point theorem on G-metric spaces implies a  fixed point result on  partial metric spaces. First, we  recall some definitions of G-metric spaces and some properties of theirs. Let $X$ be a non-empty set and  $G:X \times X
 \times X\longrightarrow  [0, \infty )$  be a function satisfying  the following conditions:
 \begin{itemize}
 \item[(G1)]
  if $x = y= z$, then $G(x,y,z)=0$,
  \item[(G2)]
  $G(x,x,y)>0$  for all  $x,y\in X$ with  $x\neq y$,
  \item[(G3)]
   $G(x,x,y) \leq G(x,y,z)$ for all  $x,y,z\in X$ with  $z\neq y$,
   \item[(G4)]
   $G(x,y,z)=G( p \{x,z,y\}) $ where $p$ is a permutation of  $x,y,z \in X$ (symmetry in all three variables),
   \item[(G5)]
   $G(x,y,z)\leq G(x,a,a) + G(a,y,z)$ for all  $x,y,z,a \in X$ (rectangle inequality).
 \end{itemize}
Then  the function  $G$  is called a {\it G-metric} on X and the pair $(X,G)$ is called  a {\it G-metric space}.
 Let $ \{x_{n} \}_{n\geq 1}$ be a sequence in  X.
$ \{x_{n} \}_{n\geq 1}$  is called a {\it G-convergent sequence to $x \in X$} if, for any $\varepsilon > 0$, there is  $k\in \mathbb{N}$ such that for all
$n,m \geq k$, $G(x, x_{n}, x_{m}) <\varepsilon$ \cite{Zead}.
   $ \{x_{n} \}_{n\geq 1}$  is a  {\it G-Cauchy sequence  } if, for any $\varepsilon > 0$, there is  $k\in \mathbb{N}$
such that for all $ n,m, l \geq k$, $G(x_{n}, x_{m}, x_{l}) < \varepsilon$ \cite{Zead}.
$(X,G)$ is said to be {\it G-complete } if every G-Cauchy sequence in X is  G-convergent
in $X$ \cite{Zead}.
\begin{prop}
\text{(see  \cite{Zead})}\label{1.2}  Let $(X,G)$ be a G-metric space and let $\{x_{n}\}_{n\geq 1}$ be a sequence in X. Then the following are equivalent:
\begin{enumerate}
\item[$(1)$]
the sequence $\{x_{n}\}_{n\geq 1}$ is  G-Cauchy,
\item[$(2)$]
$G(x_{n}, x_{m}, x_{m})\rightarrow 0$ as $n, m\rightarrow \infty$.
\end{enumerate}
\end{prop}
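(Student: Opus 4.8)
The plan is to treat the two implications separately, since the forward direction is essentially immediate while the reverse carries all the content. For $(1)\Rightarrow(2)$, I would simply unpack the definition of a G-Cauchy sequence: given $\varepsilon>0$ there is $k\in\mathbb{N}$ with $G(x_n,x_m,x_l)<\varepsilon$ for all $n,m,l\ge k$, and specializing $l=m$ gives $G(x_n,x_m,x_m)<\varepsilon$ for all $n,m\ge k$, which is exactly the convergence asserted in $(2)$. No axiom of $G$ beyond the definitions is needed here.

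For $(2)\Rightarrow(1)$, the goal is to control the full triple $G(x_n,x_m,x_l)$ using only the ``diagonal'' quantities $G(x_a,x_b,x_b)$ that hypothesis $(2)$ supplies. The key tool is the rectangle inequality (G5), which I would apply with splitting vertex $a=x_l$ to get
\[
G(x_n,x_m,x_l)\le G(x_n,x_l,x_l)+G(x_l,x_m,x_l).
\]
The first summand is already in the desired diagonal form. The second is not literally of that shape, but the full symmetry (G4) lets me rewrite $G(x_l,x_m,x_l)=G(x_m,x_l,x_l)$, so that
\[
G(x_n,x_m,x_l)\le G(x_n,x_l,x_l)+G(x_m,x_l,x_l).
\]
Both terms on the right are instances of $G(x_a,x_b,x_b)$ and hence tend to $0$ by $(2)$. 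To finish, given $\varepsilon>0$ I would use $(2)$ to pick $k$ with $G(x_a,x_b,x_b)<\varepsilon/2$ for all $a,b\ge k$; then for $n,m,l\ge k$ the displayed estimate yields $G(x_n,x_m,x_l)<\varepsilon$, so $\{x_n\}_{n\ge1}$ is G-Cauchy.

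The main obstacle — indeed essentially the only subtle point — is the correct choice of splitting vertex in (G5) together with the ensuing symmetry step. One must split at $x_l$ (rather than at $x_n$ or $x_m$) so that both resulting terms share the repeated index $x_l$, and then invoke (G4) to bring $G(x_l,x_m,x_l)$ into the controlled form $G(x_m,x_l,x_l)$; any other split leaves a genuine three-point term that hypothesis $(2)$ cannot bound directly. Everything else is a routine $\varepsilon/2$ argument.
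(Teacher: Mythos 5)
Your proposal is correct, and since the paper states this proposition without proof (simply citing Mustafa and Sims \cite{Zead}), there is nothing internal to compare it against; your argument --- specializing $l=m$ for the forward direction, and the rectangle inequality (G5) plus full symmetry (G4) with an $\varepsilon/2$ split for the converse --- is exactly the standard proof from that reference. One minor quibble: your closing remark that one \emph{must} split at $x_l$ is overstated, since splitting at $a=x_m$ works just as well, giving $G(x_n,x_m,x_l)\le G(x_n,x_m,x_m)+G(x_m,x_m,x_l)$, where (G4) rewrites the second term as $G(x_l,x_m,x_m)$, again of the diagonal form controlled by hypothesis $(2)$.
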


\begin{defin}
\text{(see \cite {Zead})} A G-metric space $(X,G)$ is said  to be {\it symmetric } if
\begin{equation}
G(x,y,y)=G(y,x,x) \text{ for all}~ x,y \in X.
\end{equation}
\end{defin}
\begin{example}
 Let $X=S^\omega$ be  the  set of $\omega$-sequences over a non-empty set $S$. The function  $G:X \times X
 \times X\longrightarrow  [0, \infty )$, defined by $$G(x,y,z)=max \{ 2^{-sup\{i|i\in \omega\forall j<i, x_{j}=y_{j}\}},  2^{-sup\{i|i\in \omega\forall j<i, y_{j}=z_{j}\}},  2^{-sup\{i|i\in \omega\forall j<i, x_{j}=z_{j}\}} \},$$
 for all $x,y,z \in X$, is a G- metric on X such that $(X,G)$   is symmetric. We note that $G$ is a generalization of the  well known {\it Baire metric},   see \cite{ kahn,9} for more details.
\end{example}
 \begin{example}(see \cite{goah})\label{ghor}
Let $X=[0,\infty)$ and
\begin{equation}
G(x,y,z)=max \{x,y\}+ max\{x,z\}+ max \{y,z\} -x-y-z\textsl{ for all} ~x,y,z \in X.
\end{equation}
 Then, $(X,G)$ is a G-metric space  and it  is symmetric.
 \end{example}
Example \ref{ghor} leads  us to the following result.

\begin{prop}\label{bagher} Let  $(X,p)$ be  a  partial metric space  and
\begin{equation}\label{1.3}
  G_{p}(x,y,z)= p(x,y)+ p(y,z)+ p(x,z) -p(x,x)-p(y,y)-p(z,z) \text{ for all}  ~ x,y,z \in X,
\end{equation}
then $(X,G_{P})$ is a G-metric space which is symmetric.
\end{prop}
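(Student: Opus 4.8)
The plan is to verify the five G-metric axioms (G1)--(G5) directly from the partial-metric axioms (P1)--(P4), and to treat the symmetry of $(X,G_p)$ as a separate (easy) check at the end. The single most useful preliminary observation is that for any $x,y \in X$ one has
\[
G_p(x,x,y) = 2p(x,y) - p(x,x) - p(y,y) = p^s(x,y),
\]
so that the ``diagonal'' values of $G_p$ are precisely the values of the induced metric $p^s$. This identity will carry most of the weight: it makes (G2) immediate, since $p^s$ is a metric and hence $p^s(x,y) > 0$ whenever $x \neq y$.

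For (G1) the computation is trivial: substituting $x=y=z$ into the definition of $G_p$ cancels the six terms in pairs and yields $0$. Axiom (G4) and the claimed symmetry are equally painless, since the defining expression for $G_p$ is a symmetric function of its three arguments: by (P3) the sum $p(x,y)+p(y,z)+p(x,z)$ is invariant under every permutation of $x,y,z$, and the subtracted self-distances $p(x,x)+p(y,y)+p(z,z)$ obviously are too. In particular $G_p(x,y,y)=p^s(x,y)=G_p(y,x,x)$, which gives the required symmetry.

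The two axioms that genuinely use triangularity (P4) are (G3) and (G5). For (G3) I would use the identity above to rewrite the desired inequality $G_p(x,x,y) \leq G_p(x,y,z)$ as $p(x,y) \leq p(x,z) + p(z,y) - p(z,z)$, which is exactly (P4) applied with $z$ as the intermediate point. For (G5) the target inequality $G_p(x,y,z) \leq G_p(x,a,a) + G_p(a,y,z)$ simplifies---after substituting $G_p(x,a,a)=p^s(x,a)$ and cancelling the common terms $p(y,z)$, $p(x,x)$, $p(y,y)$, $p(z,z)$---to
\[
p(x,y) + p(x,z) \leq \bigl(p(x,a)+p(a,y)-p(a,a)\bigr) + \bigl(p(x,a)+p(a,z)-p(a,a)\bigr),
\]
and each bracketed quantity dominates one of the two summands on the left by a single application of (P4). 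Thus (G5) is just (P4) used twice and added. The whole verification is routine bookkeeping; the only place that demands a little care is arranging the cancellation in (G5) so that the two triangle inequalities line up cleanly, and I would organise that computation around this pairing from the very start.
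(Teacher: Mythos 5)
Your verification is correct and takes essentially the same approach as the paper: the paper's proof consists precisely of your key identity $G_p(x,x,y)=p^s(x,y)$ used to check (G2), with the remaining axioms declared straightforward. Your write-up simply supplies those omitted routine checks, correctly reducing (G3) to a single application of (P4) and (G5) to two applications of (P4) added together.
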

\begin{proof}
The arguments are straightforward. As an illustration we prove (G2). If $G_{p}(x,y,y)=0$, then  $G_{p}(x,y,y)=p^{s}(x,y)=2p(x,y)-p(x,x)-p(y,y)=0$. Thus, $x=y$ since $(X,p^{s})$ is a metric space.
\end{proof}
\begin{lemma}\label{1mon}
 Let $(X, p)$ be a partial metric space. Then the following statements hold.
 \begin{itemize}
   \item[(A)]
 A sequence $\{x_{n}\}_{n\geq 1}$ in  $(X, p)$ is Cauchy if and only if it is G-Cauchy in its  associated G-metric space  $(X, G_{p})$.

   \item[(B)]  $(X, p)$ is complete if and only  if its  associated G-metric space $(X, G_{p})$ is G-complete. Moreover,
        $  {\rm lim}_{n, m \rightarrow \infty} G_{p}(x, x_{n}, x_{m}) = 0 \Leftrightarrow  p( x, x) = {\rm lim}_{n, m\rightarrow \infty} p( x_{n},x_{m}) = \\{\rm lim}_{n\rightarrow \infty} p(x_{n}, x_{n})={\rm lim}_{n \rightarrow \infty }p(x,x_{n})$.

\end{itemize}
 \end{lemma}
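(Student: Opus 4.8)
The plan is to reduce everything to the associated metric $p^s$ by way of a single algebraic identity, and then to invoke Lemma \ref{1} and Proposition \ref{1.2}. First I would establish the pointwise identity, valid for all $a,b,c \in X$,
$$G_p(a,b,c) = \tfrac{1}{2}\left(p^s(a,b) + p^s(b,c) + p^s(a,c)\right),$$
which follows at once by substituting $p^s(u,v) = 2p(u,v) - p(u,u) - p(v,v)$ into the right-hand side and comparing with \eqref{1.3}. Two specializations drive the whole argument: taking $b=c=x_m$ and using $p^s(x_m,x_m)=0$ gives $G_p(x_n,x_m,x_m) = p^s(x_n,x_m)$, while taking $a=x$, $b=x_n$, $c=x_m$ gives
$$G_p(x,x_n,x_m) = \tfrac{1}{2}\left(p^s(x,x_n) + p^s(x_n,x_m) + p^s(x,x_m)\right).$$

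For part (A) I would combine the first specialization with Proposition \ref{1.2}: the sequence $\{x_n\}$ is G-Cauchy in $(X,G_p)$ if and only if $G_p(x_n,x_m,x_m) = p^s(x_n,x_m) \to 0$, that is, if and only if $\{x_n\}$ is Cauchy in the metric space $(X,p^s)$, which by Lemma \ref{1}(1) is equivalent to $\{x_n\}$ being Cauchy in $(X,p)$.

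For the ``moreover'' assertion in (B) I would use the second specialization. Since each $p^s$-term is non-negative, a sum of three such terms tends to $0$ if and only if each summand does; hence $\lim_{n,m\to\infty} G_p(x,x_n,x_m) = 0$ is equivalent to $p^s(x,x_n)\to 0$ together with $p^s(x_n,x_m)\to 0$, the second of which is automatic once the first holds. The remaining, and most delicate, step is to translate $p^s(x,x_n)\to 0$ into the three $p$-equalities. Here I would exploit (P2): the decomposition $p^s(x,x_n) = \left(p(x,x_n)-p(x,x)\right) + \left(p(x,x_n)-p(x_n,x_n)\right)$ exhibits $p^s(x,x_n)$ as a sum of two non-negative quantities, forcing $\lim_n p(x,x_n) = p(x,x)$ and $\lim_n p(x_n,x_n) = p(x,x)$; substituting these into $p^s(x_n,x_m) = 2p(x_n,x_m) - p(x_n,x_n) - p(x_m,x_m) \to 0$ then yields $\lim_{n,m\to\infty} p(x_n,x_m) = p(x,x)$. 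The converse direction is immediate, since the three equalities make every $p^s$-term in the identity tend to $0$.

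Finally, for the completeness equivalence in (B) I would assemble the pieces. By part (A) a sequence is G-Cauchy in $(X,G_p)$ exactly when it is Cauchy in $(X,p^s)$, and by the ``moreover'' part it G-converges to $x$ (in the sense $G_p(x,x_n,x_m)\to 0$ of the text) exactly when it converges to $x$ in $(X,p^s)$. Consequently $(X,G_p)$ is G-complete if and only if every $p^s$-Cauchy sequence $p^s$-converges, i.e. if and only if $(X,p^s)$ is complete, which by Lemma \ref{1}(2) holds precisely when $(X,p)$ is complete. The only point to watch is aligning the text's definition of G-convergence with $p^s$-convergence, and this match is exactly what the second specialization of the identity provides.
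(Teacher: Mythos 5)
Your proof is correct, and for the heart of the lemma --- part (A) --- it takes a genuinely different route from the paper. Your unifying identity $G_p(a,b,c)=\tfrac12\left(p^s(a,b)+p^s(b,c)+p^s(a,c)\right)$ is valid, and its specialization $G_p(x_n,x_m,x_m)=p^s(x_n,x_m)$ disposes of (A) in one line: G-Cauchy $\Leftrightarrow$ $G_p(x_n,x_m,x_m)\to 0$ (Proposition \ref{1.2}) $\Leftrightarrow$ $p^s$-Cauchy $\Leftrightarrow$ $p$-Cauchy (Lemma \ref{1}(1)). The paper uses this kind of reduction only in the forward direction and proves the converse by hand: from G-Cauchyness it shows $\{p(x_n,x_n)\}$ is bounded in $\mathbb{R}$, extracts a subsequence converging to some $a$, shows the full sequence $\{p(x_n,x_n)\}$ is Cauchy in $\mathbb{R}$ with limit $a$, and finally that $\lim_{n,m\to\infty}p(x_n,x_m)=a$; in effect it reproves the content of Lemma \ref{1}(1) inside the G-metric framework. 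Your version delegates that analytic work to Matthews' lemma, which is shorter and cleaner, at the cost of being a pure reduction rather than a self-contained argument. For part (B) the two proofs essentially coincide: the paper observes that the symmetric G-metric $G_p$ induces the metric $d_{G_p}(x,y)=G_p(x,y,y)=p^s(x,y)$ and invokes Lemma \ref{1}(2), which is exactly your first specialization plus the same citation; and its terse treatment of the ``moreover'' equivalence via (\ref{1.3}) and (P2) is what your decomposition into non-negative pieces (splitting $p^s(x,x_n)$ as $\left(p(x,x_n)-p(x,x)\right)+\left(p(x,x_n)-p(x_n,x_n)\right)$, then recovering $\lim_{n,m\to\infty}p(x_n,x_m)=p(x,x)$) spells out --- indeed more carefully than the paper, whose intermediate conditions there are stated somewhat loosely.
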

 \begin{proof} (A)
  Let $\{x_{n}\}_{n\geq 1}$ be a Cauchy sequence  in  $(X,p)$. Then by Propositions  \ref{1.2} and \ref{bagher}, it is easily seen that $\{x_{n}\}_{n\geq 1}$ is a G-Cauchy sequence in the G-metric space $(X,G_p)$. Conversely, suppose that $\{x_{n}\}_{n\geq 1}$ is a G-Cauchy sequence in $(X,G_p)$ and $\varepsilon > 0$ is given. Then there exists $n_{0}\in \mathbb{N}$
such that $\frac{1}{4}\varepsilon  > G_{p}(x_{m}, x_{m}, x_{n}) $ for all $n,m \geq n_{0}$, and so
\begin{eqnarray*}
G_{p}(x_{n}, x_{n}, x_{n_{0}}) + p(x_{n_0},x_{n_0}) = 2p(x_{n},x_{n_0})-p(x_{n},x_{n})\geq p(x_{n},x_{n}),
\end{eqnarray*}
 by  (P2). Consequently, the sequence $\{p(x_{n}, x_{n})\}$ is bounded in $\mathbb{R}$, and so there
exists a real number, say $a$, such that a subsequence $\{p(x_{n_{k}} ,x_{n_{k}})\}$ is convergent to
$a$, i.e., \\ ${\rm lim}_{k\rightarrow \infty } p(x_{n_{k}} ,x_{n_{k}}) = a$. First we claim  that $\{p(x_{n}, x_{n})\}$ is a Cauchy sequence in $\mathbb{R}$.
If
$n,m \geq  n_{0}$, then
\begin{equation}
G_{p}(x_{m}, x_{m}, x_{n})=  2p(x_{m},  x_{n})- p(x_{m}, x_{m})- p(x_{n}, x_{n}) \nonumber
\end{equation}
\begin{equation}\label{22feb}
\geq p(x_{m}, x_{m})-p(x_{n}, x_{n}) , \nonumber
 \end{equation}
 by    (P2), which implies that
\begin{eqnarray*}
|p(x_{n}, x_{n}) -p(x_{m}, x_{m})|\leq G_{p}(x_{m}, x_{m}, x_{n})+G_{P}(x_{n}, x_{n}, x_{m}).
\end{eqnarray*}
Thus, $ | p(x_{n}, x_{n}) - p(x_{m}, x_{m})|< \frac{\varepsilon}{2}$ for all $n,m \geq n_{0}$, and so  ${\rm lim}_{n\rightarrow \infty} p(x_{n}, x_{n}) = a$. Thus, for any $\delta >0$ there exists $n_{\delta}$ such that $ |p(x_{k}, x_{k}) - a |< \delta$ and  $ G_{p}(x_{n}, x_{m}, x_{m})< \delta$ for all $k \geq  n_{\delta}$
 and so
\begin{eqnarray*}
2\vert p(x_{n}, x_{m}) - a\vert \\ =  \vert G_{p}(x_{n}, x_{m}, x_{m}) + p(x_{m}, x_{m})+ p(x_{n}, x_{n})
- 2a\vert \\  \leq G_{p}(x_{n}, x_{m}, x_{m}) + \vert  p(x_{m}, x_{m}) -a\vert  \\
 +  \vert p(x_{n}, x_{n}) - a \vert   <3 \delta
\end{eqnarray*}
for all $n,m \geq  n_{\delta}$. Hence,  ${\rm lim}_{n,m\rightarrow \infty } p(x_{n}, x_{m}) = a$ and so $\{x_{n}\}_{n\geq 1}$ is a
Cauchy sequence in $(X, p)$. \par
(B) Every G-metric space $(X,G)$ which is symmetric induces a metric $d_G$ on X given by
\begin{eqnarray}
  d_{G}(x,y)=G(x,y,y) \text{ for all}  ~ x,y \in X. \nonumber
\end{eqnarray}
Moreover, $(X,G)$ is G-complete if and only if $(X,d_{G})$ is complete. It is easily seen that $d_{G_{P}}(x,y)=p^{s}(x,y)
$ for all $x,y \in X$. By Lemma \ref{1},   $(X,p)$ is complete if and only if the metric space $(X,p^{s})$ is complete, and so the G-metric space $(X,G_{p})$
is G-complete if and only if  $(X,p)$ is  complete. By  the equality (\ref{1.3})  and (P2),  ${\rm lim}_{n, m \rightarrow \infty} G_{p}(x, x_{n}, x_{m}) = 0$  if and only if $p(x,x)={\rm lim}_{n \rightarrow \infty }p(x,x_{n})$, ${\rm lim}_{n, m \rightarrow \infty} p(x_{n},x_{m})=p(x_{n},x_{n})$ and ${\rm lim}_{m\rightarrow \infty }p(x,x_{m})=p(x_{m},x_{m})$ and these are equivalent to $$p( x, x) = {\rm lim}_{n, m\rightarrow \infty} p( x_{n},x_{m}) = {\rm lim}_{n\rightarrow \infty} p(x_{n}, x_{n})={\rm lim}_{n \rightarrow \infty }p(x,x_{n}).$$
\end{proof}
\begin{remark}\label{1shamb}

Haghi $et~al.$ \cite{hagh1,hagh2} showed that some of fixed point theorems in partial metric spaces can be obtained from metric spaces. From Proposition \ref{bagher} and Lemma \ref{1mon},  it follows  that  some  new fixed point results on partial metric  spaces are immediate consequences of the existing fixed point theorems on G-metric spaces and so they are not real generalizations. As a model example, we consider  the following result of Karapinar and  Agarwal  \cite{a}.
\end{remark}
\begin{thm}\text{ (see \cite{a})}\label{mus}
Let $(X,G)$ be a  G-metric space. If $(X,G)$ is G-complete and   $T:X\to X$ is  a mapping satisfying
\begin{equation}
G(Tx,T^{2}x,Ty)\leq G(x,Tx,y)- \phi(G(x,Tx,y))~~ \text{for all}~ x,y\in X,
\end{equation}
where $\phi :[0,\infty)\to [0,\infty)$ is a continuous mapping with $\phi^{-1}(\{0\})=\{0\}$. Then $T$ has a unique fixed point.
\end{thm}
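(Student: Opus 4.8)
The plan is to run a Picard iteration and prove that the orbit is G-Cauchy, then invoke G-completeness. Fix $x_0\in X$ and put $x_n=T^nx_0$. If $x_{N+1}=x_N$ for some $N$ then $x_N$ is already a fixed point, so assume $x_n\neq x_{n+1}$ for all $n$. Writing $t_n=G(x_n,x_{n+1},x_{n+1})$ and substituting $x=x_n,\ y=x_{n+1}$ into the contractive hypothesis (so that $Tx=x_{n+1}$, $T^2x=x_{n+2}$, $Ty=x_{n+2}$) yields
\[
t_{n+1}=G(x_{n+1},x_{n+2},x_{n+2})\le G(x_n,x_{n+1},x_{n+1})-\phi(t_n)=t_n-\phi(t_n).
\]
Hence $\{t_n\}$ is non-increasing and converges to some $L\ge 0$; since $\phi(t_n)\le t_n-t_{n+1}\to 0$, continuity of $\phi$ together with $\phi^{-1}(\{0\})=\{0\}$ forces $L=0$. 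Thus $t_n\to 0$, and the standard G-metric inequality $G(x,y,y)\le 2G(y,x,x)$ (a consequence of (G4) and (G5)) gives $G(x_n,x_n,x_{n+1})\to 0$ as well.

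A technical tool I would isolate first is a one-coordinate Lipschitz estimate: (G4) and the rectangle inequality (G5) yield, for all $x,y,z,w$,
\[
|G(x,y,z)-G(x,y,w)|\le \max\{G(z,w,w),\,G(w,z,z)\},
\]
and, after permuting arguments, the analogous bound when any single slot is replaced. The upshot is that replacing one entry of a value of $G$ by an adjacent iterate $x_{i\pm 1}$ alters that value by at most a quantity built from consecutive indices, which tends to $0$ by the previous paragraph.

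The core of the proof, and the step I expect to be hardest, is G-Cauchyness; the obstacles are the genuine asymmetry between $G(x,x,y)$ and $G(x,y,y)$ in a non-symmetric G-metric and the shifted term $T^2x$ in the hypothesis. Because of the latter, the inequality naturally controls quantities of the form $G(x_n,x_{n+1},x_m)$, so I would argue with $G(x_n,x_n,x_m)$ rather than $G(x_n,x_m,x_m)$. Assume $\{x_n\}$ is not G-Cauchy; by Proposition \ref{1.2}, $G(x_n,x_m,x_m)\not\to 0$, and since $G(x_n,x_m,x_m)\le 2G(x_n,x_n,x_m)$ there are $\varepsilon>0$ and indices $m_k>n_k\ge k$ with $G(x_{n_k},x_{n_k},x_{m_k})\ge\varepsilon$. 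Choosing $m_k$ least with this property gives $G(x_{n_k},x_{n_k},x_{m_k-1})<\varepsilon$, and applying the one-coordinate estimate to the last slot forces $G(x_{n_k},x_{n_k},x_{m_k})\to\varepsilon$. Shifting indices by one step with the same estimate, both
\[
P_k=G(x_{n_k},x_{n_k+1},x_{m_k})\quad\text{and}\quad Q_k=G(x_{n_k+1},x_{n_k+2},x_{m_k+1})
\]
also converge to $\varepsilon$, since every adjustment turns a single argument into an adjacent iterate at a vanishing cost. Finally, the substitution $x=x_{n_k},\ y=x_{m_k}$ gives $Q_k\le P_k-\phi(P_k)$; letting $k\to\infty$ and using continuity of $\phi$ yields $\varepsilon\le\varepsilon-\phi(\varepsilon)$, so $\phi(\varepsilon)=0$ and $\varepsilon=0$, a contradiction. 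Hence $\{x_n\}$ is G-Cauchy.

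By G-completeness $x_n\to u$ for some $u\in X$, that is $G(u,x_n,x_n)\to 0$ (and $G(x_n,u,u)\to 0$ by the same quadratic bound). To identify the limit, put $x=x_n,\ y=u$ to get $G(x_{n+1},x_{n+2},Tu)\le G(x_n,x_{n+1},u)$; the one-coordinate estimate shows the right-hand side tends to $G(u,u,u)=0$ and the left-hand side to $G(u,u,Tu)$, whence $G(u,u,Tu)=0$ and $Tu=u$ by (G2). For uniqueness, if $Tu=u$ and $Tv=v$ then $x=u,\ y=v$ gives $G(u,u,v)\le G(u,u,v)-\phi(G(u,u,v))$, so $\phi(G(u,u,v))=0$, hence $G(u,u,v)=0$ and $u=v$ by (G2).
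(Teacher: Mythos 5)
The paper itself contains no proof of this theorem: it is quoted from Karapinar and Agarwal \cite{a} and used as a black box to derive the partial-metric fixed point theorem that follows it, so there is nothing internal to compare your argument against; I can only judge it on its own merits, and on those it is correct and essentially self-contained. Your orbit estimate $t_{n+1}\le t_n-\phi(t_n)$ with $t_n=G(x_n,x_{n+1},x_{n+1})$, and the deduction $t_n\to 0$ from continuity of $\phi$ and $\phi^{-1}(\{0\})=\{0\}$, are right. The one-coordinate bound $|G(x,y,z)-G(x,y,w)|\le\max\{G(z,w,w),G(w,z,z)\}$ does follow from (G4) and (G5) (write $G(x,y,z)=G(z,x,y)\le G(z,w,w)+G(w,x,y)$ and symmetrize), and the doubling inequality $G(x,y,y)\le 2G(y,x,x)$ is the standard Mustafa--Sims consequence of the same axioms; together they justify passing from the non-Cauchy hypothesis to sequences $G(x_{n_k},x_{n_k},x_{m_k})\to\varepsilon$, $P_k\to\varepsilon$, $Q_k\to\varepsilon$, after which the substitution $x=x_{n_k}$, $y=x_{m_k}$ gives $\varepsilon\le\varepsilon-\phi(\varepsilon)$ and the contradiction. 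The limit identification ($G(u,u,Tu)=0$, then (G2)) and the uniqueness step are also correct. Two points you should spell out, though neither is a real gap: when you negate Cauchyness, the pairs produced by Proposition \ref{1.2} may have either order, so you need the case $n_k>m_k$ (handled directly by (G4), with no doubling and no loss of the factor $2$) as well as the case $n_k<m_k$ where your cited inequality is used; and the minimal choice of $m_k$ presupposes, for each fixed $n_k$, that some index beyond $n_k$ violates the $\varepsilon$-bound, which your construction does provide but which deserves a sentence. Your route --- Picard iteration, a perturbation lemma tailored to the asymmetry of $G$, and a least-index contradiction argument accommodating the shifted term $T^2x$ --- is exactly the kind of weak-contraction argument the cited source employs, so this is a faithful reconstruction rather than a genuinely different method.
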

We show that the following  fixed point theorem  on partial metric  spaces  is obtained by the above results.

\begin{thm}
Let $(X,p)$ be a complete partial metric space. If $T:X\to X$  is  a mapping and  the following condition holds, then $T$ has a unique fixed point.
\\ $p(Tx,T^{2}x)+p(Tx,Ty)+ p(T^{2}x,Ty)-p(T^{2}x,T^{2}x)-p(Ty,Ty)\leq p(x,Tx)+ p(x,y)+$ \\ $p(Tx,y)-p(x,x)- p(y,y)- \phi(p(x,Tx)+ p(x,y)+ p(Tx,y)-p(x,x)- p(y,y)-p(Tx,Tx))$ for all $x,y\in X$,
where $\phi :[0,\infty)\to [0,\infty)$ is a continuous mapping with $\phi^{-1}(\{0\})=\{0\}$.
\end{thm}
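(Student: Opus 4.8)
The plan is to recognize the displayed contractive hypothesis as nothing more than the Karapinar--Agarwal condition of Theorem \ref{mus}, rewritten for the associated G-metric $G_p$ of \eqref{1.3}, and then to transport completeness and the conclusion between the two structures using Proposition \ref{bagher} and Lemma \ref{1mon}. Concretely, I would show that the inequality stated for $p$ is, after expanding $G_p$ by its definition, literally
\begin{equation}
G_p(Tx,T^{2}x,Ty)\le G_p(x,Tx,y)-\phi\bigl(G_p(x,Tx,y)\bigr) \nonumber
\end{equation}
for all $x,y\in X$, so that no argument beyond invoking the G-metric theorem is required.

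First I would substitute \eqref{1.3} into the two relevant triples. Taking the points $x,Tx,y$ gives
\begin{equation}
G_p(x,Tx,y)=p(x,Tx)+p(Tx,y)+p(x,y)-p(x,x)-p(Tx,Tx)-p(y,y), \nonumber
\end{equation}
which is exactly the expression appearing inside $\phi$ in the statement; and taking $Tx,T^{2}x,Ty$ gives
\begin{equation}
G_p(Tx,T^{2}x,Ty)=p(Tx,T^{2}x)+p(T^{2}x,Ty)+p(Tx,Ty)-p(Tx,Tx)-p(T^{2}x,T^{2}x)-p(Ty,Ty). \nonumber
\end{equation}
Comparing with the displayed hypothesis, its left-hand side equals $G_p(Tx,T^{2}x,Ty)+p(Tx,Tx)$, while the part of its right-hand side preceding $\phi$ equals $G_p(x,Tx,y)+p(Tx,Tx)$. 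Hence the single self-distance term $p(Tx,Tx)$ occurs on both sides and cancels, and the stated inequality is equivalent to the G-metric contraction above. This cancellation is the only computation in the proof, and it is the step I would check most carefully, since it is precisely where the partial-metric self-distance terms must be made to match.

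With this equivalence in hand the conclusion is immediate. By Proposition \ref{bagher}, $(X,G_p)$ is a symmetric G-metric space; since $(X,p)$ is complete, Lemma \ref{1mon}(B) shows that $(X,G_p)$ is G-complete. Thus $T$ satisfies all the hypotheses of Theorem \ref{mus} with $G=G_p$ and the same $\phi$, so $T$ has a unique fixed point in $(X,G_p)$. As a fixed point is simply a point with $Tx=x$, independent of which of the two equivalent structures is used to express it, this point is the unique fixed point of $T$ on the partial metric space $(X,p)$, which completes the proof. I expect essentially no obstacle here: the whole content is that the partial-metric hypothesis was manufactured by translating the G-metric one through $G_p$, and the two notions of completeness coincide by Lemma \ref{1mon}.
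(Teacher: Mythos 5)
Your proposal is correct and follows exactly the paper's route: translate the hypothesis into the G-metric contraction for $G_p$ via Proposition \ref{bagher}, transfer completeness with Lemma \ref{1mon}, and invoke Theorem \ref{mus}. The only difference is that you spell out the cancellation of the $p(Tx,Tx)$ term showing the hypothesis is literally $G_p(Tx,T^{2}x,Ty)\le G_p(x,Tx,y)-\phi\bigl(G_p(x,Tx,y)\bigr)$, a verification the paper leaves implicit.
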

\begin{proof}
 Let $(X,G_{p})$  be the G-metric space defined in Proposition \ref{bagher}.
     Since $(X,p)$ is  complete, then by Lemma \ref{1mon},     $(X,G_{p})$ is G-complete.  Therefore, $T$ has a unique fixed point  since all conditions of  Theorem \ref{mus}  hold for the G-metric space $(X,G_{p})$.
  \end{proof}
\section{Remarks on GP-metric spaces and fixed points theorems}
Zand and Nezhad \cite{M.} recently
introduced GP-metric  spaces which are a generalization  of the notions of partial metric spaces, and G-metric spaces which are symmetric.  In this section,  we show that every  fixed point theorem on  a partial metric space implies a   fixed point result  on its associated  GP-metric space.  Following Zand and Nezhad \cite{M.}, the notion of GP-metric space is given as follows. \par
 Let $X$ be a non-empty  set and  $GP:X \times X
 \times X\longrightarrow [0, \infty )$  be a function satisfying the following conditions:
 \begin{itemize}
 \item[(GP1)]
 $0\leq GP(x,x,x)\leq GP(x,x,y)\leq GP(x,y,z)$ for all  $x,y,z\in X$,
 \item[(GP2)]
  $GP(x,y,z)=GP( p \{x,z,y\}) $ where $p$ is a permutation of  $x,y,z \in X$ (symmetry in all three variables),
 \item[(GP3)]
  $ GP(x,y,z)\leq GP(x,a,a) + GP(a,y,z) - GP(a,a,a)$ for all  $x,y,z,a\in X$ (rectangle inequality),
 \item[(GP4)]
 $x=y=z$ if $GP(x,y,z)=GP(x,x,x)=GP(y,y,y)=GP(z,z,z)$ for all  $x,y,z\in X$.
  \end{itemize}
  Then the function   $GP$  is   called a {\it GP-metric} on X and  the pair  $(X,GP)$  is called a {\it GP-metric space}. By (GP1) and (GP2), every GP-metric space  $(X,GP)$ is symmetric, that is,
   \begin{equation}
GP(x,y,y)=GP(y,x,x) \text{ for all}~ x,y \in X.
\end{equation}
An {\it open ball} for    $(X, GP)$ is a set of the form,
\begin{eqnarray}
\beta ^{GP} (x_{0}, \varepsilon) = \{ y\in X~\vert ~ GP(x_{0}, y, y) <\varepsilon + GP(x_{0},x_{0}, x_{0})\}\nonumber
\end{eqnarray}
for each  $x\in X$ and  $\varepsilon >0$. Also, these open balls form a base for  a topology $\tau_{GP}$ on X. \par   Let $(X, GP)$  and $(Y, GP1)$ be two  GP-metric spaces.  A function $f: (X, GP) \to (Y, GP1)$  is said to be {\it GP- continuous} if for any $\varepsilon >0$  and $x_{0}\in X$ there exists $\delta >0$ such that $f(\beta ^{GP} (x_{0}, \delta)) \subseteq \beta ^{GP1} (f(x_{0}) ,\varepsilon)$ \cite{M.}.

  \begin{lemma}\text{(see  \cite{ps})} \label{karp}
Let $(X,GP)$ be a GP-metric space. Then $GP(x,x,y)>0$ for all $x,y\in X$ with $x\neq y$.
\end{lemma}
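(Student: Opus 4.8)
The plan is to argue by contradiction and to push everything back onto axiom (GP4). Suppose, contrary to the claim, that there exist $x\neq y$ with $GP(x,x,y)=0$. The strategy is to show that this single vanishing forces all the relevant self-distances, namely $GP(x,x,x)$ and $GP(y,y,y)$, to vanish as well, so that the equality hypothesis of (GP4) becomes satisfied for the triple $(x,x,y)$ and yields the forbidden conclusion $x=y$.

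First I would extract the easy consequences. By the monotonicity chain in (GP1), $0\le GP(x,x,x)\le GP(x,x,y)=0$, so $GP(x,x,x)=0$ at once. Next, the full permutation symmetry (GP2) lets the vanishing of $GP(x,x,y)$ propagate to every rearrangement of its arguments, giving $GP(y,x,x)=GP(x,y,x)=GP(x,x,y)=0$. These identifications are what make the later estimates collapse to zero, so this symmetry step is doing real work rather than being cosmetic.

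The substantive step is to feed these zeros into the rectangle inequality (GP3) twice. Applying (GP3) to the triple $(y,y,x)$ with the intermediate point $a=x$ gives $GP(y,y,x)\le GP(y,x,x)+GP(x,y,x)-GP(x,x,x)=0+0-0=0$, whence $GP(x,y,y)=0$ by symmetry. A second application, now to $(y,y,y)$ with the same intermediate point $a=x$, yields $GP(y,y,y)\le GP(y,x,x)+GP(x,y,y)-GP(x,x,x)=0$, so $GP(y,y,y)=0$. Observe that the two applications must be taken in this order, since the second one consumes the equality $GP(x,y,y)=0$ produced by the first.

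At this point $GP(x,x,y)=GP(x,x,x)=GP(y,y,y)=0$, which is exactly the hypothesis of (GP4) for the triple whose entries are $x,x,y$; hence (GP4) forces $x=y$, contradicting $x\neq y$, and the lemma follows. The step I would check most carefully is the bookkeeping in the two invocations of (GP3): one has to verify that the choice $a=x$ genuinely turns each term on the right-hand side into one of the self-distances already shown to be $0$, and that verification relies precisely on (GP2) to identify $GP(y,x,x)$ and $GP(x,y,x)$ with $GP(x,x,y)$. Apart from this small matching of indices, no obstacle is expected.
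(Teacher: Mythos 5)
Your argument is correct: the step $0\le GP(x,x,x)\le GP(x,x,y)=0$ from (GP1), the permutation identities from (GP2), the two applications of (GP3) with $a=x$ (whose index bookkeeping does check out), and the final appeal to (GP4) for the triple $(x,x,y)$ all go through and yield the desired contradiction. Note, however, that the paper itself gives no proof of this lemma --- it is quoted from \cite{ps} --- so the relevant comparison is with the standard argument, which is shorter than yours: the rectangle inequality (GP3) is not needed at all. Indeed, (GP1) instantiated at the point $y$ gives $GP(y,y,y)\le GP(y,y,x)\le GP(y,x,x)$, and $GP(y,x,x)=GP(x,x,y)=0$ by (GP2); together with $GP(x,x,x)\le GP(x,x,y)=0$ this already produces $GP(x,x,y)=GP(x,x,x)=GP(y,y,y)=0$, which is exactly the hypothesis of (GP4) for the triple $(x,x,y)$, whence $x=y$. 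So your two invocations of (GP3), and your care about the order in which they must be taken, are sound but redundant: the monotonicity chain in (GP1), being quantified over all three slots, does that work for free, and it also yields the intermediate identity $GP(x,y,y)=0$ that your first (GP3) step was designed to produce. Your route costs two extra estimates but loses nothing in rigor; the short route is preferable only for economy, since it isolates the fact that the lemma is really a statement about (GP1), (GP2), (GP4) alone.
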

\begin{defin}\text{(see  \cite{M.})}
 Let $(X, GP)$  be a  GP-metric space and let $ \{x_{n} \}_{n\geq 1}$ be a  sequence in $X$.
\begin{itemize}
  \item [(i)]
  $ ~~~~~~\{x_{n} \}_{n\geq 1}$ is  called  a  {\it GP-Cauchy sequence}    if ${\rm lim}_{n, m\rightarrow \infty}GP(x_{n}, x_{m}, x_{m})$ exists (and finite).
   \item [(ii)]
    $(X, GP)$ is said to be  {\it GP-complete}   if every GP-Cauchy sequence $\{y_{n}\}_{n\geq 1}$ converges to a point $x\in X$ such that $GP(x, x, x)= {\rm lim}_{n, m\rightarrow \infty} GP(y_{n}, y_{m}, y_{m})$.
    \end{itemize}
  \end{defin}
 \begin{lemma}\text{(see \cite{M.})} \label{karp}
Let $(X,GP)$ be a GP-metric space and $ \{x_{n} \}_{n\geq 1}$ GP-convergent sequence  to $x\in X
$. Then $ \{x_{n} \}_{n\geq 1}$ is a GP-Cauchy sequence and ${\rm lim }_{n, m \rightarrow \infty }GP(x, x_{n}, x_{m}) =GP(x, x, x)$.
\end{lemma}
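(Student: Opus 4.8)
The plan is to write $L := GP(x,x,x)$ and to prove that each of the limits $\lim_n GP(x,x,x_n)$, $\lim_n GP(x_n,x_n,x_n)$, $\lim_{n,m} GP(x,x_n,x_m)$ and $\lim_{n,m} GP(x_n,x_m,x_m)$ exists and equals $L$; the last two are exactly the conclusions of the lemma, since the existence and finiteness of $\lim_{n,m} GP(x_n,x_m,x_m)$ is the definition of GP-Cauchy. I read GP-convergence of $\{x_n\}$ to $x$ as the statement $\lim_n GP(x,x_n,x_n)=L$, which is what membership in the balls $\beta^{GP}(x,\varepsilon)$ forces once combined with (GP1). Every step below is a squeeze, pairing an upper estimate from the rectangle inequality (GP3) with a lower estimate coming from (GP1) or, in the delicate cases, from (GP3) used in rearranged form.

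I would first dispose of two auxiliary self-distance limits that have no analogue in the $G$-metric theory, where $G(x,x,x)=0$: namely $GP(x,x,x_n)\to L$ and $GP(x_n,x_n,x_n)\to L$. For the first, (GP1) yields the chain $L=GP(x,x,x)\le GP(x,x,x_n)\le GP(x,x_n,x_n)$, and the right end tends to $L$ by hypothesis, so the middle is squeezed. For the second, the upper bound $GP(x_n,x_n,x_n)\le GP(x_n,x_n,x)=GP(x,x_n,x_n)\to L$ is immediate from (GP1) and (GP2); the matching lower bound is the first non-trivial point, obtained by applying (GP3) to $GP(x,x,x)$ through the intermediate point $x_n$ and rearranging to get $GP(x_n,x_n,x_n)\ge GP(x,x_n,x_n)+GP(x,x,x_n)-L\to L$.

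With these in hand, the second assertion $\lim_{n,m}GP(x,x_n,x_m)=L$ follows quickly. Its lower bound is trivial from (GP1) via $L=GP(x,x,x)\le GP(x,x,x_n)\le GP(x,x_n,x_m)$, while its upper bound comes from rewriting $GP(x,x_n,x_m)=GP(x_n,x,x_m)$ by (GP2) and routing through $x$ in (GP3), which gives $GP(x,x_n,x_m)\le GP(x,x,x_n)+GP(x,x,x_m)-L\to L$. For the GP-Cauchy assertion I would likewise show $\lim_{n,m}GP(x_n,x_m,x_m)=L$: the upper bound is (GP3) through $x$, namely $GP(x_n,x_m,x_m)\le GP(x,x,x_m)+GP(x,x_m,x_n)-L\to L$, where the cross term is controlled by the second assertion just proved, and the lower bound is (GP3) applied to $GP(x,x_m,x_m)$ through $x_n$ and rearranged, namely $GP(x_n,x_m,x_m)\ge GP(x,x_m,x_m)-GP(x,x_n,x_n)+GP(x_n,x_n,x_n)\to L$.

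The step I expect to be the genuine obstacle is the production of the lower bounds matching $L$. In the $G$-metric setting the vanishing diagonal makes all self-distances zero and collapses (GP3) into the usual rectangle inequality, so the analogous statement is essentially free; here the positive self-distances $GP(x_n,x_n,x_n)$ cannot simply be dropped and must instead be recovered by using (GP3) ``backwards'', i.e.\ bounding a GP-value from below by rearranging the rectangle inequality. The only real care the argument demands is bookkeeping: keeping track of which arguments are repeated and of the $-GP(a,a,a)$ terms so that the positive and negative contributions cancel to leave precisely $L$ in each limit. Once the two auxiliary self-distance limits are secured, both displayed conclusions drop out by routine squeezing.
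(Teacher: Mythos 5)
The paper itself gives no proof of this lemma (it is imported from \cite{M.}), so your proposal has to stand on its own merits, and unfortunately it breaks at exactly the step you flagged as ``the genuine obstacle.'' You read GP-convergence as the purely topological statement $\lim_{n} GP(x,x_n,x_n)=GP(x,x,x)=:L$ (ball membership plus (GP1)), and you then try to derive $\lim_n GP(x_n,x_n,x_n)=L$ from it. The ``matching lower bound'' you claim, $GP(x_n,x_n,x_n)\ge GP(x,x_n,x_n)+GP(x,x,x_n)-L$, has the inequality backwards: applying (GP3) to $GP(x,x,x)$ through $a=x_n$ gives $L=GP(x,x,x)\le GP(x,x_n,x_n)+GP(x_n,x,x)-GP(x_n,x_n,x_n)$, which rearranges to $GP(x_n,x_n,x_n)\le GP(x,x_n,x_n)+GP(x,x,x_n)-L$, i.e.\ an \emph{upper} bound --- the same one you already had from (GP1) and (GP2). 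No rearrangement of the axioms can produce your lower bound, because it is false: in the paper's own example $X=[0,\infty)$, $GP(x,y,z)=\max\{x,y,z\}$, take $x=2$ and $x_n$ alternating between $0$ and $2$. Then $GP(2,x_n,x_n)=2=GP(2,2,2)$, so the sequence is ball-convergent to $2$ under your reading, yet $GP(x_n,x_n,x_n)=x_n$ has no limit; worse, $GP(x_n,x_m,x_m)=\max\{x_n,x_m\}$ has no limit either, so this sequence is not even GP-Cauchy. Thus under the topological reading the lemma itself (specifically its Cauchy claim) is false, and no proof can exist.

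The repair is to use the definition of GP-convergence the paper actually adopts (stated immediately after the lemma, from \cite{goah}): $\{x_n\}$ is GP-convergent to $x$ iff $GP(x,x,x)=\lim_n GP(x_n,x_n,x_n)=\lim_n GP(x,x,x_n)$. With that hypothesis the troublesome self-distance limit is \emph{given}, not proved, and your remaining squeezes are sound and complete the argument: (GP1) gives $L\le GP(x,x,x_n)\le GP(x,x_n,x_m)$, while (GP3) through $x$ gives $GP(x,x_n,x_m)\le GP(x,x,x_n)+GP(x,x,x_m)-L$, so $\lim_{n,m}GP(x,x_n,x_m)=L$; for the Cauchy part, (GP1) gives $GP(x_n,x_n,x_n)\le GP(x_n,x_m,x_m)$ and (GP3) through $x$ gives $GP(x_n,x_m,x_m)\le GP(x,x,x_n)+GP(x,x_m,x_m)-L$, where $GP(x,x_m,x_m)\to L$ follows from the squeeze $GP(x,x,x_m)\le GP(x,x_m,x_m)\le 2GP(x,x,x_m)-L$. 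So the architecture of your proof (auxiliary self-distance limits, then squeezes pairing (GP1) with (GP3)) is fine; the fatal point is solely the attempt to manufacture $\lim_n GP(x_n,x_n,x_n)=L$ out of ball-membership, which is impossible.
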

We will use the following definition of  a GP-convergent sequence in a GP-metric space.
\begin{defin}\text{ (see \cite{goah})}
A sequence $ \{x_{n} \}_{n\geq 1}$ in a  GP-metric space $(X,GP)$ is called a {\it GP-convergent sequence  to } $x\in X$,  if

    $ GP(x, x, x)={\rm lim }_{n \rightarrow \infty }GP(x_{n}, x_{n}, x_{n}) ={\rm lim }_{n \rightarrow \infty } GP(x, x, x_{n}).$
    \end{defin}


The following theorem is needed in this paper.
\begin{thm}\text{\cite{goah}}\label{di}
 Let $(X, GP)$ be a GP-metric space. Then, the  function $ GP (x,y,z)$  is  jointly continuous in all three of its variables.
\end{thm}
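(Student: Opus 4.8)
The plan is to read ``jointly continuous'' in the sequential sense dictated by the convergence notion introduced above: since the balls $\beta^{GP}(x_0, 1/n)$ furnish a countable local base at each point, the topology $\tau_{GP}$ is first countable, so it is enough to show that whenever $\{x_n\}$, $\{y_n\}$, $\{z_n\}$ are GP-convergent to $x$, $y$, $z$ respectively, one has $GP(x_n, y_n, z_n) \to GP(x, y, z)$. The whole argument then rests on squeezing $GP(x_n, y_n, z_n)$ between two quantities that both tend to $GP(x,y,z)$, with the rectangle inequality (GP3) as the only substantial tool.

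For the upper estimate I would apply (GP3) three times, stripping off one moving vertex at a time and using the symmetry (GP2) to reposition the vertex to be replaced. Concretely, taking $a=x$, then $a=y$, then $a=z$ and interleaving permutations from (GP2), I expect to reach
\[
GP(x_n, y_n, z_n) \le GP(x_n, x, x) + GP(y_n, y, y) + GP(z_n, z, z) + GP(x,y,z) - GP(x,x,x) - GP(y,y,y) - GP(z,z,z).
\]
Since $GP(x_n,x,x)=GP(x,x,x_n)$ by (GP2), the convergence hypothesis gives $GP(x_n,x,x)\to GP(x,x,x)$, and likewise for the $y$- and $z$-terms; hence the three ``diagonal'' summands cancel against $-GP(x,x,x)-GP(y,y,y)-GP(z,z,z)$ in the limit, leaving $\limsup_n GP(x_n, y_n, z_n) \le GP(x,y,z)$.

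For the lower estimate I would run the same three applications of (GP3) in reverse, now replacing $x$, $y$, $z$ by $x_n$, $y_n$, $z_n$, to obtain
\[
GP(x, y, z) \le GP(x, x_n, x_n) + GP(y, y_n, y_n) + GP(z, z_n, z_n) + GP(x_n, y_n, z_n) - GP(x_n, x_n, x_n) - GP(y_n, y_n, y_n) - GP(z_n, z_n, z_n).
\]
Here each pair such as $GP(x,x_n,x_n)-GP(x_n,x_n,x_n)$ tends to $0$: one uses the induced symmetry $GP(x,x_n,x_n)=GP(x_n,x,x)\to GP(x,x,x)$ (which, unlike the upper bound, is not a mere permutation and genuinely needs the symmetry derived from (GP1)--(GP2)) together with $GP(x_n,x_n,x_n)\to GP(x,x,x)$ from the convergence definition, and analogously for $y$ and $z$. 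Passing to the liminf yields $GP(x,y,z) \le \liminf_n GP(x_n, y_n, z_n)$, and combining the two estimates gives $\lim_n GP(x_n, y_n, z_n) = GP(x,y,z)$.

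The step I expect to be most delicate is the bookkeeping in the two triple applications of (GP3): one must choose the permutations so that every intermediate term either becomes one of the three diagonal distances or collapses to the target $GP(x,y,z)$, and one must verify that the $-GP(a,a,a)$ corrections telescope exactly to $-GP(x,x,x)-GP(y,y,y)-GP(z,z,z)$. Once that cancellation is confirmed and the mixed diagonal limits $GP(x_n,x,x)\to GP(x,x,x)$ and $GP(x,x_n,x_n)\to GP(x,x,x)$ are justified from the convergence definition and the induced symmetry, the squeeze is immediate.
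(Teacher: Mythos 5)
The paper itself contains no proof of Theorem \ref{di}: it is quoted verbatim from \cite{goah}, so there is no internal argument to compare against. Measured against the standard proof in that literature, your core computation is exactly the right one: the two three-fold applications of (GP3) interleaved with (GP2) are valid, the induced symmetry $GP(x,x_n,x_n)=GP(x_n,x,x)=GP(x,x,x_n)$ (which, as you note, follows from (GP1)--(GP2)) turns all six ``diagonal'' terms into quantities controlled by the paper's definition of GP-convergence, and the squeeze gives $GP(x_n,y_n,z_n)\to GP(x,y,z)$ whenever $\{x_n\},\{y_n\},\{z_n\}$ GP-converge to $x,y,z$. This sequential statement is precisely the form in which the theorem is applied later in the paper (in the proof of Theorem \ref{thm}).

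However, your opening reduction is genuinely wrong and should be deleted. First countability of $\tau_{GP}$ reduces continuity of $GP$ with respect to the product topology to sequential continuity along $\tau_{GP}$-convergent sequences, and $\tau_{GP}$-convergence is strictly weaker than the GP-convergence you then work with: $x_n\to x$ in $\tau_{GP}$ only demands $GP(x,x,x_n)\to GP(x,x,x)$ and imposes nothing on $GP(x_n,x_n,x_n)$. So your squeeze does not establish $\tau_{GP}$-continuity; in fact $\tau_{GP}$-continuity is false in general. In the GP-metric space $X=[0,\infty)$ with $GP(x,y,z)=\max\{x,y,z\}$ (the space used in the paper's final example), every ball $\beta^{GP}(1,\varepsilon)=[0,1+\varepsilon)$ contains $0$, so the constant sequence $x_n=0$ converges to $1$ in $\tau_{GP}$, while $GP(0,0,0)=0\neq 1=GP(1,1,1)$. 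Hence ``jointly continuous'' in Theorem \ref{di} can only be read as sequential continuity with respect to GP-convergence in the sense of \cite{goah}; if you replace the first-countability framing by that explicit reading, the rest of your argument is correct and complete.
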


The proof of the following result is straightforward:
\begin{prop}\label{dosh2}
Every  GP-metric space $(X,GP)$ defines a partial metric space $(X,p_{GP})$, where
\begin{equation}\label{baghery}
 p_{GP}(x,y)=GP(x,y,y)  \text{ for all}  ~ x,y \in X.
\end{equation}
\end{prop}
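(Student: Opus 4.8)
The plan is to verify directly that the candidate function $p_{GP}(x,y)=GP(x,y,y)$ satisfies the four partial-metric axioms (P1)--(P4), reading each one off from the corresponding GP-metric axiom. First note $p_{GP}$ is indeed $[0,\infty)$-valued by (GP1). Three of the four axioms turn out to be immediate substitutions in the GP-axioms, so the only place that calls for a genuine argument is the nontrivial implication in (P1); that is where I would focus the write-up.

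I would dispose of the routine axioms in order. Property (P3) is exactly the built-in symmetry $GP(x,y,y)=GP(y,x,x)$ that the excerpt already records as a consequence of (GP1) and (GP2), so $p_{GP}(x,y)=p_{GP}(y,x)$. For (P2), I would chain the two inequalities of (GP1) with $z=y$, obtaining $GP(x,x,x)\le GP(x,x,y)\le GP(x,y,y)$, i.e. $p_{GP}(x,x)\le p_{GP}(x,y)$. For (P4), I would specialize the rectangle inequality (GP3), namely $GP(x,y,z)\le GP(x,a,a)+GP(a,y,z)-GP(a,a,a)$, by applying it to the triple $(x,z,z)$ and choosing $a=y$; this yields $GP(x,z,z)\le GP(x,y,y)+GP(y,z,z)-GP(y,y,y)$, which is precisely $p_{GP}(x,z)\le p_{GP}(x,y)+p_{GP}(y,z)-p_{GP}(y,y)$.

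The remaining axiom (P1) is the one I expect to require more than a direct substitution, and it is the conceptual heart of the verification. The forward direction is trivial: if $x=y$, then $GP(x,x,x)$, $GP(x,y,y)$, and $GP(y,y,y)$ all coincide, so $p_{GP}(x,x)=p_{GP}(x,y)=p_{GP}(y,y)$. For the converse, I would assume $p_{GP}(x,x)=p_{GP}(x,y)=p_{GP}(y,y)$, that is $GP(x,x,x)=GP(x,y,y)=GP(y,y,y)$, and recognize this as exactly the hypothesis of (GP4) read on the triple $(x,y,y)$: the condition $GP(x,y,y)=GP(x,x,x)=GP(y,y,y)=GP(y,y,y)$ holds, so (GP4) forces $x=y=y$, hence $x=y$. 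This single application of (GP4) is the crux; everything else is just unwinding $p_{GP}(x,y)=GP(x,y,y)$. As a consistency check one may note that the positivity statement $GP(x,x,y)>0$ for $x\ne y$ (Lemma~\ref{karp}) is compatible with—though, once (GP4) is invoked, not actually needed for—this conclusion. Having established (P1)--(P4), $(X,p_{GP})$ is a partial metric space, completing the argument.
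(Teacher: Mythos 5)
Your verification is correct, and it is exactly the routine axiom-by-axiom check that the paper has in mind when it states the proposition with the remark ``the proof of the following result is straightforward'' and omits the details. In particular, your use of (GP4) on the triple $(x,y,y)$ for the nontrivial direction of (P1), and of (GP3) with $a=y$ for (P4), supplies precisely the substitutions the authors leave to the reader.
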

The following result is a direct consequence of Proposition \ref{dosh2} and the above definitions.
\begin{corollary}\label{2sh}

\begin{itemize}
 \item[(1)] A sequence $ \{x_{n} \}_{n\geq 1}$  is a GP-Cauchy sequence in a GP-metric space $(X,GP)$ if and only if it is a Cauchy sequence  in its associated partial metric space $(X,p_{GP})$.\nonumber
 \item[(2)] A GP-metric space $(X,GP)$ is GP-complete if and only if its associated partial metric space   $(X,p_{GP})$ is complete.
\end{itemize}
\end{corollary}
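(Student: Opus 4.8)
The plan is to reduce everything to the single identity furnished by \eqref{baghery}: writing $p:=p_{GP}$, we have $p(a,b)=GP(a,b,b)$ for all $a,b\in X$, and in particular $p(a,a)=GP(a,a,a)$. Thus for any sequence $\{x_n\}_{n\ge 1}$ and any indices $n,m$ one has the literal equality $GP(x_n,x_m,x_m)=p(x_n,x_m)$. By Proposition \ref{dosh2} the pair $(X,p)$ is a genuine partial metric space, so the two conclusions become comparisons of identical quantities rather than analytic estimates.

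For part (1) I would simply substitute this identity into the two definitions. A sequence $\{x_n\}_{n\ge1}$ is GP-Cauchy precisely when $\lim_{n,m\to\infty}GP(x_n,x_m,x_m)$ exists and is finite; replacing $GP(x_n,x_m,x_m)$ by $p(x_n,x_m)$ turns this word for word into the statement that $\lim_{n,m\to\infty}p(x_n,x_m)$ exists and is finite, i.e.\ that $\{x_n\}_{n\ge1}$ is Cauchy in $(X,p)$. Hence the GP-Cauchy sequences of $(X,GP)$ and the Cauchy sequences of $(X,p_{GP})$ are exactly the same sequences, with no estimation required.

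For part (2) I would feed the outcome of part (1) into the two completeness definitions and again translate via $p(a,a)=GP(a,a,a)$. Given a Cauchy sequence, its distinguished limit value is $\lim_{n,m\to\infty}GP(x_n,x_m,x_m)=\lim_{n,m\to\infty}p(x_n,x_m)$, and a candidate limit point $x$ satisfies the GP-completeness equality $GP(x,x,x)=\lim_{n,m\to\infty}GP(x_n,x_m,x_m)$ if and only if it satisfies the partial-metric completeness equality $p(x,x)=\lim_{n,m\to\infty}p(x_n,x_m)$. Since part (1) identifies the two classes of Cauchy sequences, requiring each GP-Cauchy sequence to have such a point is the same requirement as asking each $p$-Cauchy sequence to have one; reading the biconditional in each direction yields both implications of (2).

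The one point that needs genuine care --- and which I expect to be the only real obstacle --- is that each completeness definition also asks the sequence to \emph{converge} to the point $x$, and a priori ``$\tau_{GP}$-convergence'' and ``$\tau_{p}$-convergence'' are distinct conditions that could be strictly stronger than the displayed limit equalities. I would dispose of this by comparing the basic neighbourhoods directly: from $p(x_0,y)=GP(x_0,y,y)$ and $p(x_0,x_0)=GP(x_0,x_0,x_0)$ the GP-ball $\beta^{GP}(x_0,\varepsilon)=\{y:GP(x_0,y,y)<\varepsilon+GP(x_0,x_0,x_0)\}$ coincides with the partial-metric ball $B_{p}(x_0,\varepsilon)=\{y:p(x_0,y)<p(x_0,x_0)+\varepsilon\}$. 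Hence $\tau_{GP}=\tau_{p_{GP}}$, the two notions of convergence to $x$ agree, and the convergence clauses in the two completeness definitions carry exactly the same content. With this identification in hand the equivalence in (2) follows at once, and the corollary is indeed, as asserted, a direct consequence of Proposition \ref{dosh2} and the definitions.
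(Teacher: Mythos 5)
Your proof is correct and follows exactly the route the paper intends: the paper offers no argument beyond declaring the corollary ``a direct consequence of Proposition \ref{dosh2} and the above definitions,'' and your word-for-word translation of both Cauchyness and completeness via the identity $p_{GP}(x,y)=GP(x,y,y)$ is precisely that direct consequence. Your extra step of checking that the balls $\beta^{GP}(x_0,\varepsilon)$ and $B_{p_{GP}}(x_0,\varepsilon)$ are literally the same sets, so that the two notions of convergence carry identical content, fills in a detail the paper leaves implicit and only strengthens the argument.
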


By Propositions \ref{bagher} and \ref{dosh2}, we have the following result.
\begin{thm}\label{gomeh}
 Every  GP-metric space $(X,GP)$ defines a  G-metric space $(X,G_{GP})$ which is symmetric, where
\begin{itemize}\label{nafea}
  \item[]
  $G_{GP}(x,y,z)=GP(x,y,y)+GP(x,z,z)+GP(y,z,z)-GP(x,x,x)-GP(y,y,y)-GP(z,z,z) $ for all  $ x,y ,z\in X$.
\end{itemize}
\end{thm}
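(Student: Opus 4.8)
The plan is to obtain $(X, G_{GP})$ by simply composing the two constructions already at our disposal: first pass from the GP-metric to its associated partial metric via Proposition \ref{dosh2}, and then pass from that partial metric to its associated symmetric G-metric via Proposition \ref{bagher}. Because Proposition \ref{bagher} already certifies that its output is a symmetric G-metric, essentially no new verification of the G-metric axioms is required; the entire task reduces to checking that the composite formula coincides with the expression for $G_{GP}$ written in the statement.

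Concretely, I would first apply Proposition \ref{dosh2} to the given GP-metric space $(X, GP)$ to produce the partial metric $p_{GP}(x,y) = GP(x,y,y)$, so that $(X, p_{GP})$ is a partial metric space. I would then feed $(X, p_{GP})$ into Proposition \ref{bagher}, which returns the symmetric G-metric
\begin{equation*}
G_{p_{GP}}(x,y,z) = p_{GP}(x,y) + p_{GP}(y,z) + p_{GP}(x,z) - p_{GP}(x,x) - p_{GP}(y,y) - p_{GP}(z,z).
\end{equation*}

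The closing step is a direct substitution. Reading off $p_{GP}(x,y) = GP(x,y,y)$, $p_{GP}(y,z) = GP(y,z,z)$, $p_{GP}(x,z) = GP(x,z,z)$, together with $p_{GP}(w,w) = GP(w,w,w)$ for each $w \in \{x,y,z\}$, the displayed formula collapses precisely to the expression defining $G_{GP}$, up to harmlessly reordering the two cross terms $GP(x,z,z)$ and $GP(y,z,z)$. Thus $G_{GP} = G_{p_{GP}}$ inherits from Proposition \ref{bagher} the property of being a symmetric G-metric. I expect no real obstacle: the argument is purely formulaic. The single point worth flagging is that the symmetry of $p_{GP}$ needed to invoke Proposition \ref{bagher}, namely $p_{GP}(x,y) = p_{GP}(y,x)$, is exactly the identity $GP(x,y,y) = GP(y,x,x)$ recorded immediately after the GP-metric axioms, so this hypothesis is met automatically.
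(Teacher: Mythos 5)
Your proposal is correct and is exactly the paper's argument: the paper derives Theorem \ref{gomeh} by composing Proposition \ref{dosh2} (passing from $GP$ to the partial metric $p_{GP}(x,y)=GP(x,y,y)$) with Proposition \ref{bagher} (passing from a partial metric to its symmetric G-metric), the formula for $G_{GP}$ being the direct substitution you carried out. No difference in approach worth noting.
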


\begin{corollary}\label{29morddad}
Let $(X,GP)$ be a GP-metric space and let $(X,G_{GP})$ be its associated G-metric space. Then  the following conditions hold.
\begin{itemize}

  \item[(1)] A sequence is GP-Cauchy in  $(X,GP)$ if and only if it is a G-Cauchy sequence in  $(X,G_{GP})$.
    \item[(2)]  $(X,GP)$ is GP-complete  if and only if   $(X,G_{GP})$ is G-complete. Moreover, \\  $  {\rm lim}_{n, m \rightarrow \infty} G_{GP}(x, x_{n}, x_{m}) = 0 \Leftrightarrow  GP( x, x,x) =$ \\$ {\rm lim}_{n\rightarrow \infty} GP( x,x_{n},x_{n}) = {\rm lim}_{n,m\rightarrow \infty} GP(x_{n},x_{m}, x_{m}) = {\rm lim}_{n\rightarrow \infty} GP(x_{n},x_{n}, x_{n})$.

\end{itemize}
\end{corollary}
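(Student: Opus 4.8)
The plan is to observe that the two-step passage from the GP-metric $GP$ to its associated partial metric $p_{GP}$ (Proposition \ref{dosh2}) and then from $p_{GP}$ to its associated G-metric $G_{p_{GP}}$ (Proposition \ref{bagher}) produces exactly the G-metric $G_{GP}$ introduced in Theorem \ref{gomeh}. Substituting $p_{GP}(u,v)=GP(u,v,v)$ (so that $p_{GP}(u,u)=GP(u,u,u)$) into the formula $G_p(x,y,z)=p(x,y)+p(y,z)+p(x,z)-p(x,x)-p(y,y)-p(z,z)$ of Proposition \ref{bagher} gives
\[
G_{p_{GP}}(x,y,z)=GP(x,y,y)+GP(y,z,z)+GP(x,z,z)-GP(x,x,x)-GP(y,y,y)-GP(z,z,z),
\]
which is precisely $G_{GP}(x,y,z)$. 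Hence $G_{GP}=G_{p_{GP}}$, and both assertions reduce to composing the partial-metric statements already established.

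For part (1), I would chain Corollary \ref{2sh}(1) with Lemma \ref{1mon}(A): a sequence $\{x_n\}_{n\geq 1}$ is GP-Cauchy in $(X,GP)$ if and only if it is Cauchy in the associated partial metric space $(X,p_{GP})$, and by Lemma \ref{1mon}(A) the latter holds if and only if $\{x_n\}_{n\geq 1}$ is G-Cauchy in $(X,G_{p_{GP}})=(X,G_{GP})$. Composing the two equivalences yields (1).

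For the first part of (2), the same chaining applies: by Corollary \ref{2sh}(2), $(X,GP)$ is GP-complete if and only if $(X,p_{GP})$ is complete, and by Lemma \ref{1mon}(B) this is equivalent to $(X,G_{p_{GP}})=(X,G_{GP})$ being G-complete. For the ``Moreover'' clause I would rewrite the chain of limit identities of Lemma \ref{1mon}(B) under the dictionary $p_{GP}(u,v)=GP(u,v,v)$: the conditions $p_{GP}(x,x)=\lim p_{GP}(x_n,x_m)=\lim p_{GP}(x_n,x_n)=\lim p_{GP}(x,x_n)$ translate termwise into $GP(x,x,x)=\lim GP(x_n,x_m,x_m)=\lim GP(x_n,x_n,x_n)=\lim GP(x,x_n,x_n)$, while the identity $G_{p_{GP}}(x,x_n,x_m)=G_{GP}(x,x_n,x_m)$ matches the left-hand sides of the two equivalences.

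I expect no genuine obstacle here: once the identity $G_{GP}=G_{p_{GP}}$ is recorded, the whole corollary is a formal substitution into results proved earlier. The only point demanding care is the bookkeeping in the ``Moreover'' clause, where each limit appearing in the partial-metric statement must be matched to the correct diagonal or off-diagonal evaluation of $GP$ via $p_{GP}(u,v)=GP(u,v,v)$; applying this substitution consistently is what makes the translation exact.
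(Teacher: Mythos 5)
Your proposal is correct and follows exactly the route the paper intends: the corollary is stated without explicit proof as a consequence of Proposition \ref{dosh2}, Corollary \ref{2sh}, Proposition \ref{bagher} and Lemma \ref{1mon}, and your key observation that $G_{GP}=G_{p_{GP}}$ (so the GP-to-G passage factors through the associated partial metric) is precisely what makes that chaining legitimate. The termwise translation of the limit identities in the ``Moreover'' clause via $p_{GP}(u,v)=GP(u,v,v)$ is also the intended bookkeeping, so nothing is missing.
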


\begin{remark}\label{2}
From Proposition \ref{dosh2} and Corollary  \ref{2sh},  it follows  that a fixed point theorem on  partial metric spaces implies a fixed point theorem  on GP-metric spaces, and so by  Theorem \ref{gomeh} and Corollary \ref{29morddad}, from every  fixed point theorem  on G-metric spaces a fixed point result on GP-metric spaces can be obtained.
\end{remark}

For instance, let $(X,p)$ be a complete partial metric space, $\phi: X \to [0,\infty) $ be a lower semi continuous function and $T$ a self-mapping on $X$ such
that 
$p(x,Tx) \leq \phi(x) - \phi(Tx)~ \textsl{for all} ~x\in X.
$
 ~~In 2011, Karapinar proved that the self-mapping  $T$  called a Caristi map on $(X,p)$ has a fixed point  \cite{aaa}. Thus by the Karapinar result, the following fixed point  Theorem on  GP-metric spaces  can be obtained.
\begin{thm}\label{crits}
Let $(X,GP)$ be a GP-metric space, $\phi: X \to [0,\infty) $ be a lower semi continuous function and $T$ a self-mapping on $X$ such
that
\begin{equation}
GP(x,Tx,Tx) \leq \phi(x) - \phi(Tx)~ \textsl{for all} ~x\in X.\nonumber
\end{equation}
If $(X,GP)$ is GP-complete, then the self-mapping $T$ has a fixed point.
\end{thm}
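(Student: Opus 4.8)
The plan is to deduce this theorem from Karapinar's Caristi-type fixed point theorem on complete partial metric spaces by passing to the partial metric associated to $(X,GP)$ in Proposition \ref{dosh2}. I would begin by setting $p := p_{GP}$, so that $p(x,y)=GP(x,y,y)$ for all $x,y\in X$. Specializing this identity with $y=Tx$ gives $p(x,Tx)=GP(x,Tx,Tx)$, and therefore the hypothesis $GP(x,Tx,Tx)\le \phi(x)-\phi(Tx)$ is literally the Caristi inequality $p(x,Tx)\le \phi(x)-\phi(Tx)$ for the self-map $T$ on the partial metric space $(X,p)$.

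The completeness hypothesis transfers directly: by Corollary \ref{2sh}, part (2), the GP-completeness of $(X,GP)$ is equivalent to the completeness of the associated partial metric space $(X,p)$. The only hypothesis of Karapinar's theorem still to be checked is that $\phi$ is lower semi-continuous with respect to the partial metric topology $\tau_{p}$. For this I would simply compare the two families of open balls. Since $p(x_{0},x_{0})=GP(x_{0},x_{0},x_{0})$, the partial-metric ball $\{y\in X : p(x_{0},y)<p(x_{0},x_{0})+\varepsilon\}$ coincides with the GP-metric ball $\beta^{GP}(x_{0},\varepsilon)$ for every $x_{0}\in X$ and every $\varepsilon>0$. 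Hence $\tau_{p}=\tau_{GP}$, and lower semi-continuity of $\phi$ in the GP sense is exactly lower semi-continuity in the partial metric sense.

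Having matched all hypotheses, I would invoke Karapinar's result \cite{aaa} for the complete partial metric space $(X,p)$, the lower semi-continuous map $\phi$, and the Caristi map $T$, which yields a point $x^{*}\in X$ with $Tx^{*}=x^{*}$. Because $T$ is the same point map on the underlying set, $x^{*}$ is a fixed point of $T$ in $(X,GP)$ as well, finishing the proof. The only step that is not pure bookkeeping is the identification of the two topologies, and I expect that to be the main (though mild) obstacle; once the defining inequalities for the two balls are placed side by side the equality $\tau_{p}=\tau_{GP}$ is immediate, so no genuine difficulty arises.
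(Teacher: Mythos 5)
Your proposal is correct and follows essentially the same route as the paper's own proof: pass to the associated partial metric $p_{GP}$ of Proposition \ref{dosh2}, note that the hypothesis becomes exactly the Caristi inequality $p_{GP}(x,Tx)\le\phi(x)-\phi(Tx)$, transfer completeness via Corollary \ref{2sh}, and invoke Karapinar's theorem. The only difference is that you explicitly verify that the open balls of $p_{GP}$ and of $GP$ coincide, so $\tau_{p_{GP}}=\tau_{GP}$ and the lower semi-continuity hypothesis transfers --- a point the paper's proof passes over in silence, so your version is if anything slightly more complete.
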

\begin{proof}
By Proposition \ref{dosh2} and Corollary \ref{2sh}, $(X,P_{GP})$ is a  complete partial metric space, where
\begin{equation}\label{baghery}
 p_{GP}(x,y)=GP(x,y,y)  \text{ for all}  ~ x,y \in X.
\end{equation}
Thus by hypothesis
\begin{equation}
p_{GP}(x,Tx) \leq \phi(x) - \phi(Tx)~ \textsl{for all} ~x\in X.\nonumber
\end{equation}
The Karapinar result mentioned above completes the proof.
\end{proof}

Finally, we present a new fixed point theorem on  GP-metric spaces such that this theorem can not be obtained from the well known fixed point results by the above techniques.
 We will  start with the following definition.
\begin{defin}
 Let $(X, GP)$ be  a GP-metric space and let $\phi : X \times X \rightarrow [0, \infty ) $  be a map. If    $T:(X,GP)\to (X,GP)$ is a GP-continuous function and $\{ x_{n} \}$ is a GP-convergent sequence to $x\in X$
 such that the following implication holds,
 \begin{eqnarray*}
 {\rm lim}_{n, m\rightarrow \infty }  GP  (x_{n}, x_{m}, Tx_{m}) = GP(x, x, x) \Rightarrow \\ \phi (x, Tx)
 \leq {\rm lim}_{ m\rightarrow \infty} {\rm inf} \phi (x_{m}, Tx_{m})=sup_{n\geq 1}inf_{m\geq n}\phi (x_{m}, Tx_{m}),
 \end{eqnarray*}
 then  the function $\phi$ is called a {\it T-lower semi-continuous} (T-l.s.c.) on $X\times X$.
Moreover, If for all $x\in X$ the following inequality holds:
   \begin{equation}\label{3.1}
GP(x,Tx,T^{2}x)\leq \phi (x,Tx)-\phi (Tx,T^{2}x),
\end{equation}
then $T$ is called a {\it  Caristi map} on $(X, GP)$.
 \end{defin}
 Now we give a generalization of Theorem \ref{crits}.

\begin{thm}\label{thm}
Let $\phi : X \times X \rightarrow [0, \infty )$, $ T:X\to X$ be two maps  and let $(X, GP)$ be a GP-metric space which is  GP-complete.  If   $\phi $ is  a T-l.s.c. function on $X\times X$  and $T$ is  a  Caristi map on $(X, GP)$. Then,    $T$  has a fixed point in $X$.
\end{thm}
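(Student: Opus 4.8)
The plan is to run the Picard iteration and exploit the telescoping that is built into the Caristi inequality \eqref{3.1}. Fix $x_{0}\in X$, set $x_{n+1}=Tx_{n}$ so that $x_{n}=T^{n}x_{0}$, and write $\psi_{n}=\phi(x_{n},x_{n+1})=\phi(x_{n},Tx_{n})$. Applying \eqref{3.1} at $x=x_{n}$ gives
\begin{equation}
GP(x_{n},x_{n+1},x_{n+2})\le \psi_{n}-\psi_{n+1}\qquad\text{for all }n,\nonumber
\end{equation}
and since $GP\ge 0$ the sequence $\{\psi_{n}\}$ is non-increasing and bounded below by $0$, hence it converges to some $L\ge 0$ and $\sum_{n}GP(x_{n},x_{n+1},x_{n+2})=\psi_{0}-L<\infty$. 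Using (GP1) and (GP2) in the form $GP(x_{n},x_{n+1},x_{n+1})\le GP(x_{n},x_{n+1},x_{n+2})$ together with $p_{GP}(x,y)=GP(x,y,y)$ from Proposition \ref{dosh2}, I obtain $p_{GP}(x_{n},x_{n+1})\le \psi_{n}-\psi_{n+1}$.

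Write $p:=p_{GP}$ and let $p^{s}$ be its associated metric. The first genuine task is to show that $\{x_{n}\}$ is GP-Cauchy. From (P2), $p(x_{n},x_{n})\le p(x_{n},x_{n+1})\le \psi_{n}-\psi_{n+1}\to 0$, and iterating the partial-metric triangle inequality (P4) yields, for $m>n$,
\begin{equation}
p(x_{n},x_{m})\le \sum_{k=n}^{m-1}p(x_{k},x_{k+1})\le \psi_{n}-\psi_{m}.\nonumber
\end{equation}
Since $\{\psi_{n}\}$ converges, the right-hand side tends to $0$ as $n,m\to\infty$, whence $p^{s}(x_{n},x_{m})\le 2p(x_{n},x_{m})\to 0$, so $\{x_{n}\}$ is Cauchy in $(X,p^{s})$. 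By Lemma \ref{1} it is Cauchy in $(X,p)$, hence GP-Cauchy in $(X,GP)$ by Corollary \ref{2sh}. GP-completeness then produces a point $x^{*}\in X$ to which $\{x_{n}\}$ GP-converges, and Corollary \ref{29morddad} combined with $\lim_{n,m}p(x_{n},x_{m})=0$ forces $GP(x^{*},x^{*},x^{*})=0$.

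It remains to identify $x^{*}$ as a fixed point, and this is the main obstacle. I first check that the hypothesis of the $T$-l.s.c. implication holds. Since $Tx_{m}=x_{m+1}$, an application of (GP3) with $a=x_{m}$ gives
\begin{equation}
GP(x_{n},x_{m},x_{m+1})\le p(x_{n},x_{m})+GP(x_{m},x_{m},x_{m+1})-p(x_{m},x_{m})\le p(x_{n},x_{m})+(\psi_{m}-\psi_{m+1}),\nonumber
\end{equation}
where $GP(x_{m},x_{m},x_{m+1})\le GP(x_{m},x_{m+1},x_{m+2})\le \psi_{m}-\psi_{m+1}$ by (GP1). Letting $n,m\to\infty$ gives $\lim_{n,m}GP(x_{n},x_{m},Tx_{m})=0=GP(x^{*},x^{*},x^{*})$, so the $T$-l.s.c. property yields $\phi(x^{*},Tx^{*})\le \liminf_{m}\psi_{m}=L$.

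To finish, I use that the $T$-l.s.c. hypothesis also carries GP-continuity of $T$. By Theorem \ref{di} the GP-convergence $x_{n}\to x^{*}$ together with continuity gives $Tx_{n}=x_{n+1}\to Tx^{*}$ in the GP-sense, so $p(x_{n+1},x_{n+1})\to p(Tx^{*},Tx^{*})$ and $p(x_{n+1},Tx^{*})\to p(Tx^{*},Tx^{*})$. Because $p(x_{n+1},x_{n+1})\to 0$, this forces $p(Tx^{*},Tx^{*})=0$ and $p(x_{n},Tx^{*})\to 0$. Applying (P4) once more,
\begin{equation}
p(x^{*},Tx^{*})\le p(x^{*},x_{n})+p(x_{n},Tx^{*})-p(x_{n},x_{n})\longrightarrow 0,\nonumber
\end{equation}
so $p(x^{*},Tx^{*})=0$; combined with $p(x^{*},x^{*})=p(Tx^{*},Tx^{*})=0$, axiom (P1) gives $Tx^{*}=x^{*}$. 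I expect the delicate point to be precisely this last passage: the summability and Cauchy estimates are routine once the comparison $p(x_{n},x_{n+1})\le\psi_{n}-\psi_{n+1}$ is secured, but converting the GP-limit behaviour and the $T$-l.s.c. inequality into the exact identity $Tx^{*}=x^{*}$ is where the argument must be run carefully, which is why I route it through the associated partial metric $p_{GP}$, where the triangle inequality and the separation axiom (P1) pin the fixed point down.
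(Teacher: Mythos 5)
Your strategy is genuinely different from the paper's: you run the Picard iteration $x_{n+1}=Tx_{n}$ and try to close the argument with GP-continuity of $T$, whereas the paper never iterates $T$ at all. It builds its sequence by a Caristi--Kirk type near-infimum selection: $x_{n+1}$ is chosen in $A(x_{n})=\{z: GP(x_{n},z,Tz)\le \phi(x_{n},Tx_{n})-\phi(z,Tz)\}$ with $\phi(x_{n+1},Tx_{n+1})\le a(x_{n})+\frac{1}{n}$, where $a(x)=\inf\{\phi(z,Tz): z\in A(x)\}$. The first half of your argument is fine: the telescoping bound $p_{GP}(x_{n},x_{m})\le\psi_{n}-\psi_{m}$, the Cauchy/completeness step producing $x^{*}$ with $GP(x^{*},x^{*},x^{*})=0$, and the verification of the T-l.s.c.\ hypothesis giving $\phi(x^{*},Tx^{*})\le L$ are all correct.

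The gap is the final step. GP-continuity of $T$ is a purely topological notion (continuity for the ball topology $\tau_{GP}$), so from $x_{n}\to x^{*}$ it yields only $\tau_{GP}$-convergence of $Tx_{n}=x_{n+1}$ to $Tx^{*}$, i.e. $p(Tx^{*},x_{n+1})\to p(Tx^{*},Tx^{*})$; it does \emph{not} yield GP-convergence ``in the GP-sense'', and in particular your claim $p(x_{n+1},x_{n+1})\to p(Tx^{*},Tx^{*})$ is unjustified (Theorem \ref{di} concerns joint continuity of the function $GP$, not preservation of GP-convergence by GP-continuous self-maps). This matters because $\tau_{GP}$ is only $T_{0}$: topological limits are not unique and say nothing about self-distances. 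For instance, in the paper's example $GP(x,y,z)=\max\{x,y,z\}$ on $[0,\infty)$, a sequence tending to $0$ converges in $\tau_{GP}$ to \emph{every} point of the space, so ``$x_{n+1}\to x^{*}$ and $x_{n+1}\to Tx^{*}$'' cannot force $Tx^{*}=x^{*}$. What you can legitimately extract from topological convergence together with (P2) and (P4) is $p(x^{*},Tx^{*})=p(Tx^{*},Tx^{*})$; since $p(x^{*},x^{*})=0$, axiom (P1) applies only if this common value is $0$, and that is exactly what is missing. Note also that the inequality $\phi(x^{*},Tx^{*})\le L$ you derived is never actually used to finish. The paper closes the argument by a mechanism your scheme cannot reproduce: it shows $z_{0}$ and $Tz_{0}$ lie in every $A(x_{n})$, so the infima give the reverse bounds $L\le\phi(z_{0},Tz_{0})$ and $L\le\phi(Tz_{0},T^{2}z_{0})$, hence $\phi(z_{0},Tz_{0})=\phi(Tz_{0},T^{2}z_{0})=L$, and then the Caristi inequality (\ref{3.1}) itself forces $GP(z_{0},Tz_{0},T^{2}z_{0})=0$, whence $Tz_{0}=z_{0}$ by Lemma \ref{karp}. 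A plain Picard orbit provides no lower bound on $\phi(Tx^{*},T^{2}x^{*})$ whatsoever, so your proof cannot be repaired locally; it needs something like the paper's infimum construction imported into it.
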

\begin{proof}
For   $x$    in  $X$, let  $A(x)$ and  $a (x)$ denote  the following sets:
\begin{eqnarray}\label{4}
A(x)&=&\{ z \in X~\vert ~ GP(x, z, Tz)\leq \phi (x, Tx)-\phi (z, Tz)\}, \\ 
a(x) &=&{\rm inf} \{ \phi (z, Tz)~\vert ~ z\in A(x)\}.
\end{eqnarray}
By (\ref{3.1}),  $Tx\in A(x)$ and so  $A(x)\neq \varnothing$.   $z\in A(x)$ implies that $0\leq \alpha (x)\leq \phi (z, Tz)\leq \phi (x, Tx)$.  A  sequence $\{x_{n}\}_{n\geq 1}$ can be constructed  in the following way:
\begin{eqnarray}
x_{1}& := &x,\nonumber \\
x_{n+1}& \in & A(x_{n})~ \textsl{such that}~ \phi (x_{n+1}, Tx_{n+1})\leq a(x_{n}) +\frac{1}{n}, \forall n\in \mathbb{N}.
\end{eqnarray}
For every $n \in \mathbb{N}$, one can easily observe that
\begin{eqnarray}\label{10}
GP(x_{n}, x_{n+1}, Tx_{n+1})& \leq & \phi (x_{n}, Tx_{n})-\phi (x_{n+1}, Tx_{n+1}), \\
a(x_{n})& \leq &\phi (x_{n+1}, Tx_{n+1})  \leq   a(x_{n}) + \frac{1}{n},\nonumber
\end{eqnarray}
  and so  $\{\phi (x_{n}, Tx_{n})\}$ is a decreasing sequence of non-negative real numbers Thus, the sequence $\{\phi (x_{n}, Tx_{n})\}$ is convergent to a   real number $L$, and so by  (\ref{10}), we have
\begin{equation}\label{11}
L= {\rm lim }_{n\rightarrow \infty}\phi (x_{n}, Tx_{n}) = {\rm lim }_{n\rightarrow \infty} a(x_{n}).
\end{equation} By (\ref{11}), for each $k\in \mathbb{N}$ there exists $N_{k}\in \mathbb{N}$ such that
\begin{equation}
\phi (x_{n}, Tx_{n}) \leq L + \frac{1}{k}, \textsl{ for all } n \geq N_{k}.
\end{equation}
Since  $\{\phi (x_{n}, Tx_{n})\}$ is a decreasing sequence,  we have
\begin{equation}
L\leq \phi (x_{m}, Tx_{m})\leq \phi (x_{n}, Tx_{n}) \leq L+\frac{1}{k},  ~\textsl{for all} ~ m\geq n\geq N_{k},\nonumber
\end{equation}
and so
\begin{equation}\label{14}
\phi (x_{n}, Tx_{n}) - \phi (x_{m}, Tx_{m}) < \frac{1}{k}, ~\textsl{for all} ~ m\geq n\geq N_{k}.
\end{equation}
By (GP3), (GP1) and  (\ref{10}), the following inequalities hold.
\begin{eqnarray}\label{12}
GP(x_{n}, x_{n+2}, Tx_{n+2})& \leq & GP(x_{n}, x_{n+1}, x_{n+1}) + GP(x_{n+1}, x_{n+2}, Tx_{n+2}) -GP(x_{n+1}, x_{n+1}, x_{n+1}) \nonumber \\
& \leq & GP(x_{n}, x_{n+1}, Tx_{n+1}) + GP(x_{n+1}, x_{n+2}, Tx_{n+2}) \nonumber \\
& \leq & \phi (x_{n}, Tx_{n}) - \phi (x_{n+1}, Tx_{n+1}) +\phi (x_{n+1}, Tx_{n+1}) - \phi (x_{n+2}, Tx_{n+2})\nonumber \\
& = & \phi (x_{n}, Tx_{n}) - \phi (x_{n+2}, Tx_{n+2})  \nonumber .
\end{eqnarray}
Similar to the above calculations we have the following inequalities.
\begin{eqnarray}\label{13}
GP(x_{n}, x_{n+3}, Tx_{n+3})& \leq & GP(x_{n}, x_{n+2}, x_{n+2}) + GP(x_{n+2}, x_{n+3}, Tx_{n+3})
- GP(x_{n+2}, x_{n+2}, x_{n+2})  \nonumber\\
& \leq & GP(x_{n}, x_{n+2},  Tx_{n+2}) +  GP(x_{n+2}, x_{n+3}, Tx_{n+3})  \nonumber\\
 & \leq & \phi (x_{n}, Tx_{n}) - \phi (x_{n+2}, Tx_{n+2}) + \phi (x_{n+2}, Tx_{n+2})  \nonumber \\
& - &  \phi (x_{n+3}, Tx_{n+3}) \nonumber \\
& =  & \phi (x_{n}, Tx_{n}) - \phi (x_{n+3}, Tx_{n+3}). \nonumber
\end{eqnarray}
By  a simple  induction, the following inequality holds.
\begin{equation}\label{15}
GP(x_{n}, x_{m}, Tx_{m}) \leq \phi (x_{n}, Tx_{n}) - \phi (x_{m}, Tx_{m}) ~\textsl{for all} ~m\geq n,
\end{equation}
By   Theorem \ref{nafea} and  (GP1), we have:
\begin{eqnarray}\label{17}
G_{GP}(x_{n}, x_{m}, x_{m}) &\leq & 2GP(x_{n}, x_{m}, x_{m})  \nonumber \\
& \leq & 2GP(x_{n}, x_{m}, Tx_{m}).
\end{eqnarray}
  Since the sequence $\{ \phi (x_{n}, Tx_{n})\}$ is convergent, the right-hand side of (\ref{15})  tends to zero, and so  $GP(x_{n}, x_{m}, Tx_{m})$ tends to zero as $n, m \rightarrow \infty$. Thus  by (\ref{17}),   $\{x_{n}\}_{n\geq 1}$ is G-Cauchy in the G-metric space $(X,G_{GP})$. Since $(X, GP)$ is GP-complete, then by Corollary  \ref{29morddad} its associated G-metric space $(X, G_{GP})$ is G-complete. Thus there is $z_0\in X$ such that  the sequence $\{x_{n}\}_{n\geq 1}$ is G-convergent to $z_0 \in X$, and so by Corollary  \ref{29morddad}
\begin{eqnarray}\label{18}
GP(z_{0}, z_{0}, z_{0})  & =  & {\rm lim}_{n \rightarrow \infty} GP(z_{0}, x_{n}, x_{n})
  =  {\rm lim}_{n, m \rightarrow \infty }GP(x_{n}, x_{m}, x_{m}) \nonumber \\
 & \leq &  {\rm lim }_{n, m \rightarrow \infty}GP(x_{n}, x_{m}, Tx_{m})=0.
 \end{eqnarray}
  Thus,  $\{x_{n}\}_{n\geq 1}$ is GP-convergent to $z_0$ and $GP(z_{0}, z_{0}, z_{0})= {\rm lim }_{n, m \rightarrow \infty}GP(x_{n}, x_{m}, Tx_{m})$ and so by (\ref{11}), $\phi(z_{0}, Tz_{0})  \leq {\rm lim}_{m\rightarrow \infty} {\rm inf}\phi (x_{m}, Tx_{m})=L$ since  $\phi$ is T-l.s.c.. Therefore,  the inequality (\ref{15}) and Theorem \ref{di} imply that
\begin{eqnarray}
\phi(z_{0}, Tz_{0}) &  \leq &{\rm lim}_{m\rightarrow \infty} {\rm inf}\phi (x_{m}, Tx_{m}) \nonumber \\
& \leq & {\rm lim}_{m\rightarrow \infty} {\rm inf}[\phi (x_{n}, Tx_{n}) - G_{P}(x_{n}, x_{m}, Tx_{m})] \nonumber \\
& = & \phi (x_{n}, Tx_{n}) - GP(x_{n}, z_{0}, Tz_{0}).
\end{eqnarray}
 Therefore, (\ref{4}) implies that $z_{0}\in  A(x_{n})$  for all $n\in \mathbb{N}$ and so by   (\ref{11}),  $L= {\rm lim}_{n\rightarrow \infty} a(x_{n}) \leq   \phi (z_{0}, Tz_{0})$. Thus, $L = \phi (z_{0}, Tz_{0}).$ \\
By (GP3), (GP1) and  (\ref{15}) the following inequalities  hold.
\begin{eqnarray}\label{3.13}
0\leq GP(x_{n}, Tz_{0}, T^{2}z_{0}) & \leq & GP(x_{n}, z_{0}, z_{0}) + GP(z_{0}, Tz_{0}, T^{2}z_{0}) - GP(z_{0}, z_{0}, z_{0}) \nonumber \\
& \leq & GP(x_{n}, z_{0}, Tz_{0}) + GP(z_{0}, Tz_{0}, T^{2}z_{0}) \nonumber \\
& \leq & \phi (x_{n}, Tx_{n}) - \phi (z_{0}, Tz_{0}) + \phi (z_{0}, Tz_{0}) - \phi (Tz_{0}, T^{2}z_{0}) \nonumber \\
& = & \phi (x_{n}, Tx_{n}) - \phi (Tz_{0}, T^{2}z_{0}).  
\end{eqnarray}
 Clearly , $ \phi (Tz_{0}, T^{2}z_{0}) \leq \phi (x_{n}, Tx_{n})$ for all $n\in \mathbb{N}$ and so $\phi (Tz_{0}, T^{2}z_{0}) \leq L$ by (\ref{11}). By (\ref{4}) and (\ref{3.13}), $Tz_{0}\in  A(x_{n})$ for all $n\in \mathbb{N}$  which implies  that $a(x_{n}) \leq \phi (Tz_{0}, T^{2}z_{0})$ for all $n\in \mathbb{N}$,
  and so   $L\leq \phi (Tz_{0}, T^{2}z_{0})$ by (\ref{11}).  Hence, $\phi (z_{0}, Tz_{0})= L=\phi (Tz_{0}, T^{2}z_{0})$. Since $T$ is a   Caristi map on $(X, GP)$, we have $GP(z_{0}, Tz_{0}, T^{2}z_{0}) =0$. Regarding Lemma \ref{karp}, $Tz_{0}=T^{2}z_{0}=z_{0}$, which  completes the proof.
\end{proof}
\begin{example}
Let $X=[0, \infty)$ and $GP(x, y, z)={\rm max}\{x, y, z\}$, then $(X, GP)$ is a GP-metric space \cite{M.}. Suppose $T:X\rightarrow X$ such that $Tx= \frac{x}{2}$ for all $x\in X$ and $\phi(t,s) :X\times X\rightarrow [0, \infty)$ such that $\phi (t, s)=2(t+s)$. Then,
\begin{eqnarray}
GP(x, Tx, T^{2}x) = {\rm max}\{x, \frac{x}{2}, \frac{x}{4}\}=x~~~\textsl{and }~~~\phi (x, Tx)-\phi (Tx, T^{2}x)= \frac{3x}{2}.\nonumber
\end{eqnarray}
Therefore by  Theorem \ref{thm},  $T$ has a fixed point. Notice that $x=0$ is the fixed point of $T$.
\end{example}


\end{document}